\newcommand{\X}{{\cal X}}
\newcommand{\be}{\begin{equation}}
\newcommand{\ee}{\end{equation}}
\newcommand{\LCal}{\mathcal{L}}
\begin{document}

\title{
On the non-ergodic convergence rate of an inexact augmented Lagrangian framework for composite convex programming
}

\titlerunning{Non-ergodic convergence rate of an IAL framework for convex programming}        

\author{Ya-Feng Liu \and Xin Liu \and Shiqian Ma}


\institute{Y.-F. Liu \at
              State Key Laboratory
of Scientific and Engineering Computing,\\
Institute of Computational
Mathematics and Scientific/Engineering Computing,\\
 Academy of Mathematics and Systems Science, \\
Chinese Academy of Sciences, China \\
              \email{yafliu@lsec.cc.ac.cn}
           \and
           X. Liu \at
              State Key Laboratory of Scientific and Engineering
		Computing,\\
 Academy of Mathematics and Systems Science,\\
  Chinese Academy of Sciences and University of Chinese Academy of Sciences, China\\
              \email{liuxin@lsec.cc.ac.cn}
           \and
           S. Ma \at
              {Department of Mathematics, University of California, Davis, CA 95616, USA \\
              \email{sqma@math.ucdavis.edu}}
}

\date{Received: date / Accepted: date}

\maketitle

\begin{abstract}
In this paper, we consider the linearly constrained composite convex optimization problem, whose objective is a sum of a smooth function and a possibly nonsmooth function. We propose an inexact augmented Lagrangian (IAL) framework for solving the problem. {The stopping criterion used in solving the augmented Lagrangian subproblem in the proposed IAL framework is weaker and potentially much easier to check than the one used in most of the existing IAL frameworks/methods.} {We analyze the global convergence and the non-ergodic convergence rate of the proposed IAL framework. {Preliminary numerical results are presented to show the efficiency of the proposed IAL framework and the importance of the non-ergodic convergence and convergence rate analysis.}}




\keywords{Inexact augmented Lagrangian framework \and Non-ergodic convergence rate}
 \subclass{90C25 \and 65K05}
\end{abstract}


\section{Introduction}
{In this paper, we consider the linearly constrained composite convex optimization problem
\begin{equation}\label{problemcouple}
\begin{array}{cl}
\displaystyle \min_{x\in\mathbb{R}^{n}} & \displaystyle F(x):=f(x)+g(x)\\ \mbox{s.t.} & Ax=b,\\
\end{array}
\end{equation}
where $A\in \mathbb{R}^{m\times n}$ and $b\in\mathbb{R}^{m};$
$f(x)$ is a convex smooth function with {$L_f$}-Lipschitz continuous gradient; and $g(x)$ is a closed convex (not necessarily smooth) function. An important example of problem \eqref{problemcouple} is that $g(x)$ is an indicator function of a closed convex set $\X,$ that is,
\begin{equation*}g(x)=\text{Ind}_{\X}(x):=\left\{
  \begin{array}{cl}
    0,~\text{if}~x\in \X,\\
    +\infty,~\text{otherwise}.
  \end{array}\right.
\end{equation*} In this case, problem \eqref{problemcouple} can be rewritten as
\begin{equation}\label{problemsmooth}
\begin{array}{cl}
\displaystyle \min_{x\in \X}~ \displaystyle f(x),~\mbox{s.t.} \quad Ax=b.
\end{array}
\end{equation}

}

One efficient approach to solving problem \eqref{problemcouple} is the {augmented Lagrangian (AL) method} \cite{hestenes69,powell69,rockafellar}.
The AL function of problem \eqref{problemcouple} is \be\label{augLag} \LCal_\beta(x;\lambda) := \hat f_{\beta}(x;\lambda)+g(x), \ee
where
\begin{equation}\label{hatf}\hat f_{\beta}(x;\lambda):=f(x)+\langle \lambda, Ax-b \rangle + \frac{\beta}{2} \|Ax-b\|^2,\end{equation}
$\lambda\in\mathbb{R}^m$ is the Lagrange multiplier associated with the linear constraint, and $\beta>0$ is {{the penalty parameter}}. The augmented Lagrangian dual 
of problem \eqref{problemcouple} is \begin{equation}\label{dual}
\max_{\lambda\in\mathbb{R}^{m}}\ d(\lambda)
\end{equation}
with \be\label{subproblem}d(\lambda):=\min_{x\in\mathbb{R}^{n}} \LCal_\beta(x;\lambda).\ee
It is well-known that the dual function $d(\lambda)$ in \eqref{subproblem} is differentiable and its gradient is given by $\nabla d(\lambda)=Ax(\lambda)-b,$ where $x(\lambda)$  is the solution of problem \eqref{subproblem} (see \cite{ber99programming}).

Given $\lambda^k,$ the AL method for solving problem \eqref{problemcouple} updates the primal and dual variables via
\begin{equation}\label{primalupdate}
x(\lambda^k)=\displaystyle\arg\min_{x\in\mathbb{R}^{n}} \LCal_\beta(x;\lambda^k)
\end{equation}
and $$\lambda^{k+1}= \lambda^k + \beta\left(Ax(\lambda^k)-b\right),$$ respectively. The AL method for solving problem \eqref{problemcouple} is essentially a dual gradient ascent method {for solving the dual problem \eqref{dual}}, which updates the dual variable by performing a dual gradient ascent step $$\lambda^{k+1}=\lambda^k+ \beta \nabla d(\lambda^k).$$
{The AL method can also be interpreted as a {proximal point algorithm} applied to solve the classical {Lagrangian dual problem \cite{rockafellar} \begin{equation}\label{classicaldual}\max_{\lambda} d'(\lambda),\end{equation}} where $$d'(\lambda):=\min_{x\in\mathbb{R}^{n}} \left\{f(x)+\langle \lambda, Ax-b \rangle +g(x)\right\}.$$}{In this paper, we always refer to the augmented Lagrangian dual (problem) whenever we talk about the Lagrangian dual (problem) or the dual (problem), unless otherwise specified.}
{More generally, the AL method can be derived through the Bregman regularization approach \cite{osher2005iterative,yin2008bregman,goldstein2009split} and it enjoys the so-called error-forgetting property\cite{errorforgetting} when applied to solve problem \eqref{problemcouple} where $F(x)$ is a piece-wise linear function.} {For various variants of the AL method with nonquadratic penalty terms and other multiplier update formulas, please see \cite[Chapter 5]{bertsekas2014constrained}, \cite{iusem1995convergence,tapia,yuan,buys}.}

When the problem dimension $n$ is large, finding an exact solution of AL subproblem \eqref{primalupdate} can be computationally expensive and thus the exact gradient $\nabla d(\lambda^k)$ is often unavailable. As a result, many works focused on inexact versions of (dual) gradient methods; {{see \cite{bertsekas2000gradient,eckstein2013practical,nesterov14inexact,lan2015iteration,luo1993error,necoara15,necoara152,necoara14siamopt,rockafellar,so13inexact,sun15sdpnal+,rockafellar1973multiplier,sun17,tval}}} and references therein. {For instance, Necoara and Patrascu \cite{necoara15} analyzed dual first-order methods for solving a class of \emph{strongly} convex conic programs and provided a detailed (ergodic and non-ergodic) convergence rate analysis of the methods. The methods in \cite{necoara15} are the exact gradient methods applied to solve the dual problem \eqref{dual} where the penalty parameter $\beta$ in \eqref{hatf} is set to be zero ({i.e., problem \eqref{classicaldual}}) and thus is different from the AL method where the penalty parameter $\beta$ in \eqref{hatf} is positive. }
{For the inexact augmented Lagrangian (IAL) framework,} Rockafellar \cite{rockafellar} proposed an IAL framework, where the AL subproblem is solved until a point $x^{k+1}$ is found such that
\begin{equation}\label{neseta}\LCal_\beta\left(x^{k+1};\lambda^k\right)-\LCal_\beta\left(x(\lambda^k);\lambda^k\right)\leq \eta_k^2,\end{equation} and showed that the proposed IAL framework converges if the nonnegative tolerance sequence $\left\{\eta_k\right\}$ is summable.
Very recently, Devolder, Glineur, and Nesterov \cite{nesterov14inexact} proposed a general inexact gradient framework and analyzed the ergodic convergence rate of their framework when it is applied to solve dual problem \eqref{dual}. In \cite{necoara14siamopt}, Nedelcu, Necoara, and Tran{-}Dinh proposed an IAL method, where the AL subproblem was approximately solved by Nesterov's  gradient method {{\cite{nesterov83method,nesterov05mpsmooth,nesterov13composite}}}
such that \eqref{neseta} is satisfied and showed again the ergodic convergence rate of the proposed IAL method. {The non-ergodic convergence rate result for the IAL framework/method has been missing in the literature for a long time until in a very recent work by Lan and Monterio \cite{lan2015iteration}, where they proposed an IAL method (where the AL subproblems are approximately solved by Nesterov's  gradient method) and analyzed the non-ergodic convergence rate for the proposed method.}

{
We make the following assumptions throughout this paper.}

\begin{itemize}
  {\item [A1] there exists a Lagrange multiplier $\lambda^*$ such that the optimal {value} of problem \eqref{problemcouple} is equal to $d(\lambda^*);$
  \item [A2] the function $g(x)$ has a bounded domain. }
\end{itemize}


{Assumption A1 is the strong duality assumption and assumption A2 is made in the paper mainly for the ease of presentation. In fact, for problem \eqref{problemcouple} arising from many applications of interest such as machine learning, statistics, and signal processing, we often can easily find a bounded set $\X$ such that the solution of problem \eqref{problemcouple} lies in $\X.$ Therefore, we can restrict the definition of $g(x)$ over this bounded set. Let us take the following basis pursuit problem in compressed sensing as an example:
\begin{equation}\label{problemcs}
\begin{array}{cl}
\displaystyle \min_{x\in\mathbb{R}^{n}} & \displaystyle \|x\|_1\\ \mbox{s.t.} & Ax=b,\\
\end{array}
\end{equation}
 which is a special case of problem \eqref{problemcouple} with $f(x)=0$ and $g(x)=\|x\|_1.$ We can restrict the definition of $\|x\|_1$ over the bounded domain $$\left\{x\,|\,\|x\|_1\leq \|\hat x\|_1\right\},$$ where $\hat x$ is any point satisfying $A\hat x=b.$} {It is worth remarking that problem \eqref{problemsmooth} has been considered in the existing papers such as \cite{necoara14siamopt,lan2015iteration,necoara152} and they all assumed that the set $\cal X$ {is} convex and compact.}


The contribution of this paper is {twofold}. First, we propose a new IAL framework (see Algorithm \ref{alg:cgal}) for solving problem \eqref{problemcouple}, where the AL subproblem is approximately solved until a point $x^{k+1}$ is found such that
\begin{equation}\label{violationcgal}\max_{x\in\mathbb{R}^{n}} \left\{\left\langle{{\nabla \hat f_{\beta}(x^{k+1};\lambda^k)}},\,x^{k+1}-x\right\rangle + g(x^{k+1})-g(x)\right\}\leq \eta_k.\end{equation} {Here $\nabla \hat f_{\beta}(x;\lambda)$ is the gradient of $\hat f_{\beta}(x;\lambda)$ with respect to $x$.} The termination condition \eqref{violationcgal} in our proposed IAL framework is weaker and (potentially) easier to check than \eqref{neseta} in most of the existing IAL frameworks/methods. {More specifically, {to check whether $x^{k+1}$ satisfies \eqref{violationcgal} or not, we only need to solve the convex optimization problem on the left-hand side of \eqref{violationcgal}, which can be solved exactly or to a high precision in time (essentially) linear to the size of the input for many $g(x)$ such as the $\ell_1$-norm and the nuclear norm; see more examples in \cite{thesiscndg}. In contrast, it is generally hard to check whether $x^{k+1}$ satisfies \eqref{neseta} or not (because $x(\lambda^k)$ is unknown).}} Second, {we establish the global convergence of the proposed IAL framework under the assumption that the sequence $\left\{\eta_k\right\}$ in \eqref{violationcgal} is summable; see Theorem \ref{thm-convergence}. We also show, in Theorems \ref{thm-rate} and \ref{thm-rate-global}, the non-ergodic convergence rate (under weaker conditions than that in \cite{lan2015iteration}) for the proposed IAL framework, which reveals how the error in solving the AL subproblem affects the convergence rate.}

{It is worth
highlighting here that the non-ergodic analysis focuses on the iterates generated by
the algorithm while the ergodic analysis focuses on some (weighted) average of
the iterates generated by the algorithm. In practice, the non-ergodic iterates
tend to share structural properties of the solution of the problem such as sparsity in $\ell_1$ minimization problem \eqref{problemcs}, while the ergodic iterates tend to ``average
out" these properties. Therefore, the non-ergodic solution is more preferable in practical applications. In fact, our simulation results on the basis pursuit problem in Section \ref{sec:simulation} show that the last iterate indeed is much better than the average of all iterates in terms of the sparsity. From the perspective of theoretical analysis, the non-ergodic convergence implies and {thus is} stronger than the ergodic convergence. This paper will focus on the non-ergodic convergence analysis.
}

\section{The IAL framework}\label{sec:cgal}

In this section, we present the IAL framework for solving problem \eqref{problemcouple}. 
The proposed IAL framework is given in Algorithm \ref{alg:cgal}. At the $k$-th iteration, the IAL framework first solves AL subproblem \eqref{subproblemcgal} with fixed dual variable $\lambda^k$ in an inexact manner until a point $x^{k+1}$ satisfying \eqref{violationcgal} is found; then updates the dual variable by performing an inexact gradient ascent step \eqref{lambda}.

\begin{algorithm2e}[ht]
\caption{The IAL framework for {solving} problem \eqref{problemcouple}}
\label{alg:cgal}
\SetKwInOut{Input}{input}\SetKwInOut{Output}{output}
\SetKwComment{Comment}{}{}
\BlankLine
Initialize $x^1 \in \X,~\lambda^1\in\mathbb{R}^m,$ and the nonnegative sequence $\left\{\eta_k\right\}.$ \\
\For{$k\geq 1$:}{

Find an approximate solution $x^{k+1}$ of the AL subproblem \begin{equation}\label{subproblemcgal}\displaystyle \min_{x\in\mathbb{R}^{n}} \left\{\LCal_\beta(x;\lambda^k):=\hat f_{\beta}(x;\lambda^k)+g(x)\right\}\end{equation} such that \eqref{violationcgal} is satisfied;

Update the dual variable via \begin{equation}\label{lambda}
\lambda^{k+1}=\lambda^k+\beta\left(Ax^{k+1}-b\right).
  \end{equation}
}
\end{algorithm2e}

Three remarks on the proposed IAL framework are in order.
{First, the termination condition \eqref{violationcgal} in our proposed IAL framework is (potentially) easier to check than \eqref{neseta} in most of the existing IAL frameworks/methods. Let us take problem \eqref{problemcs} as an example again. In this case, to check whether $x^{k+1}$ satisfies \eqref{violationcgal} or not, we only need to solve the following convex optimization problem
$$\max_{\|x\|_1\leq\|\hat x\|_1} \left\{\left\langle \nabla \hat f_{\beta}(x^{k+1};\lambda^k),\,x^{k+1}-x\right\rangle + \left\|x^{k+1}\right\|_1-\left\|x\right\|_1\right\},$$ where $\nabla \hat f_{\beta}(x^{k+1};\lambda^k)=A^T\left(\lambda^k+\beta\left(Ax^{k+1}-b\right)\right).$ Let $\bar i_{k+1}$ be the index of the largest entry of $\nabla \hat f_{\beta}(x^{k+1};\lambda^k)$ in magnitude, then the solution to the above optimization problem is
\begin{equation*}\bar x^{k+1}=\left\{
\begin{array}{ll}-\|\hat x\|_1{\text{sign}\left(\left[\nabla \hat f_{\beta}(x^{k+1};\lambda^k)\right]_{\bar i_{k+1}}\right)}e_{\bar i_{k+1}},&\mbox{if $\left|\left[\nabla \hat f_{\beta}(x^{k+1};\lambda^k)\right]_{\bar i_{k+1}}\right|\geq 1$;}\\
0,&\mbox{otherwise,}\end{array}\right.
\end{equation*}where $e_{\bar i_{k+1}}$ is the $n$-dimensional vector with the $\bar i_{k+1}$-th entry being $1$ and all other entries being $0$ {and $\text{sign}(\cdot)$ is the sign function}. 
}


Second, the smaller the tolerance $\eta_k$ is, the more computational cost is needed in Algorithm \ref{alg:cgal} to find the point $x^{k+1}$ satisfying \eqref{violationcgal}. On the other hand, the larger the tolerance $\eta_k$ is, the larger the approximation error between the approximate gradient $Ax^{k+1}-b$ and the true gradient $\nabla d(\lambda^k)$ is (see Lemma \ref{lemma-ggap} further ahead), which might lead to slow convergence or even divergence of the proposed Algorithm \ref{alg:cgal}. {Therefore, the choice of $\left\{\eta_k\right\}$ is important in balancing the computational cost (of finding the point $x^{k+1}$ satisfying \eqref{violationcgal}) and the global convergence and convergence rate (of the framework). We will discuss the possible choices of $\left\{\eta_k\right\}$ in more details in Section \ref{subsec:rate}.}

{Third, AL subproblem \eqref{subproblemcgal} can be efficiently solved in an inexact manner by various (first-order) methods such as {the accelerated proximal gradient methods} in {{\cite{nesterov83method,nesterov05mpsmooth,nesterov13composite,beck09siamfista}}} and the Frank-Wolfe (a.k.a. conditional gradient) methods in \cite{fw56,nesterov15nonsmooth,lansco14,Jaggi13,nemirovski14cg}. Next, we discuss the (inner) iteration complexity of finding the point $x^{k+1}$ satisfying \eqref{violationcgal} when {the accelerated proximal gradient methods} and the Frank-Wolfe methods are applied to solve problem \eqref{subproblemcgal}. {Although any of the accelerated proximal gradient methods in \cite{nesterov83method,nesterov05mpsmooth,nesterov13composite,beck09siamfista} and also any of the Frank-Wolfe methods in \cite{fw56,nesterov15nonsmooth,lansco14,Jaggi13,nemirovski14cg} can be used in solving AL subproblem \eqref{subproblemcgal}, we choose the algorithms in \cite{beck09siamfista} and \cite{nesterov15nonsmooth} in our following analysis.}

Let us fist define
  \begin{equation}\label{Lhatf}
    L_{\hat f} = L_f + \beta \|A\|^2,
  \end{equation}where $\|A\|$ denotes the largest singular value of the matrix $A.$ It is simple to see that $\nabla_x \hat f_{\beta}(x;\lambda)$ (with respect to $x$) is Lipschitz continuous with Lipschitz constant $L_{\hat f}.$ Note that $L_{\hat f}$ does not depend on the Lagrange multiplier $\lambda.$ Moreover, let $\cal X$ denote the bounded domain of the function $g(x)$ and let \begin{equation}\label{Dx}D=\max_{x,y\in\cal X} \left\|x-y\right\|<+\infty\end{equation} denote the diameter of the set $\cal X.$

\begin{algorithm2e}[ht]
\caption{{The fast iterative shrinkage-thresholding algorithm (FISTA) {for solving AL subproblem \eqref{subproblemcgal}} \cite{beck09siamfista}}}
\label{alg:nonsmoothCG}
\SetKwInOut{Input}{input}\SetKwInOut{Output}{output}
\SetKwComment{Comment}{}{}
\BlankLine
Initialize $y^{k,1}=x^{k,0}\in \X$ and $t^{1}=1$.\\
\For{$\ell\geq 1$:}{
Set $t^{\ell+1} = \dfrac{1+\sqrt{1+4(t^{\ell})^2}}{2}$\;
Compute $$ x^{k,\ell} = \arg\min_{x\in\mathbb{R}^{n}}\left\{\left\langle \nabla \hat f_{\beta}(y^{k,\ell};\lambda^k),\,x-y^{k,\ell}\right\rangle+\frac{L_{\hat f}}{2}\left\|x - y^{k,\ell}\right\|^2+g(x)\right\}$$
and
$$y^{k, \ell+1} = x^{k, \ell} + \left(\frac{t^{\ell}-1}{t^{\ell+1}}\right) \left(x^{k,\ell} - x^{k,\ell-1}\right);$$
}
\end{algorithm2e}

\begin{theorem}\label{ThmNesInner}
  Let ${\left\{x^{k,\ell}\right\}_{\ell\geq 1}}$ be the sequence generated by {FISTA {(i.e., Algorithm 2)}} when applied to solve AL subproblem \eqref{subproblemcgal}, where $\ell$ is the index of the inner iteration. Suppose that $\hat x^{k,\ell}$ is the point such that
  \begin{equation}\label{proximalgradient}
    \hat x^{k,\ell} = \arg\min_{x\in\mathbb{R}^{n}}\left\{\left\langle \nabla \hat f_{\beta}(x^{k,\ell};\lambda^k),\,x-x^{k,\ell}\right\rangle+\frac{L_{\hat f}}{2}\left\|x - x^{k,\ell}\right\|^2+g(x)\right\}.
  \end{equation}
  Then,
  \begin{equation}\label{ratenes}\max_{x\in\mathbb{R}^{n}}\left\{\left\langle \nabla \hat f_{\beta}(\hat x^{k,\ell};\lambda^k),\,\hat x^{k,\ell} - x\right\rangle + g(\hat x^{k,\ell})-g(x)\right\}\leq \frac{{4L_{\hat f} D^2}}{\ell+1},~\forall~{\ell \geq 1}.\end{equation}
  In other words, it takes at most \begin{equation}\label{numbernes}\left\lceil\frac{{4L_{\hat f}D^2}}{\eta_k}\right\rceil-1\end{equation} {FISTA} iterations and one proximal gradient iteration (equivalent to solving problem \eqref{proximalgradient}) to find the point $x^{k+1}$ satisfying \eqref{violationcgal}.
\end{theorem}
\begin{proof} For simplicity, denote $\hat f_{\beta}(x; \lambda)$ by $\hat f(x),$ $\LCal_{\beta}(x; \lambda)$ by $\LCal(x),$ and $x^{k,\ell}~\text{and}~\hat x^{k,\ell}$ by $x^{\ell}~\text{and}~\hat x^{\ell}$ (respectively) for all ${\ell\geq 1}$ in the proof. First, {it follows from \cite[Theorem 4.4]{beck09siamfista}} and the definitions of $L_{\hat f}$ in \eqref{Lhatf} and $D$ in \eqref{Dx} that
  \begin{equation}\label{nestgrad}
    \LCal(x^{\ell}) -  \LCal(x(\lambda^k)) \leq {\frac{2L_{\hat f} D^2}{(\ell+1)^2}},~\forall~{\ell \geq 1}.
  \end{equation} Note that $\hat x^{\ell}$ in \eqref{proximalgradient} is obtained after performing a proximal gradient step from $x^{\ell}.$ Then, from \cite[Lemma 2.3]{beck09siamfista} and \cite[Theorem 2]{nesterov13composite}, we get
  \begin{equation}\label{proximalstep}
    \LCal(x^{\ell}) -  \LCal(\hat x^{\ell}) \geq \frac{L_{\hat f}}{2}\left\|\hat x^{\ell}-x^{\ell}\right\|^2.
  \end{equation}
  Combining \eqref{nestgrad} and \eqref{proximalstep} yields
  \begin{equation}\label{hatxx}
    \left\|\hat x^{\ell}-x^{\ell}\right\| \leq \frac{{2D}}{\ell+1},~\forall~{\ell\geq 1}.
  \end{equation}
  By the optimality of $\hat x^{\ell}$, we have
  \begin{equation}\label{hatxoptcond}\left\langle \nabla \hat f(x^{\ell}) + L_{\hat f}\left(\hat x^{\ell} - x^{\ell}\right),\,x-\hat x^{\ell} \right\rangle + g(x) - g(\hat x^{\ell})\geq 0, ~\forall~x\in \cal X,\end{equation} where $\cal X$ is the domain of $g(x).$
   Therefore, for {any} $x\in\cal X,$ 
   \begin{align*}
     &~\left\langle \nabla \hat f(\hat x^{\ell}),\,\hat x^{\ell} - x\right\rangle + g(\hat x^{\ell})-g(x)\nonumber\\
     =&~\left\langle \nabla \hat f(\hat x^{\ell}) - \nabla \hat f(x^{\ell}),\,\hat x^{\ell} - x\right\rangle + \left\langle \nabla \hat f(x^{\ell}),\,\hat x^{\ell} - x\right\rangle + g(\hat x^{\ell})-g(x)\nonumber\\
       \leq &~\left\langle \nabla \hat f(\hat x^{\ell}) - \nabla \hat f(x^{\ell}),\,\hat x^{\ell} - x\right\rangle + \left\langle L_{\hat f}\left(\hat x^{\ell} - x^{\ell}\right),\,x-\hat x^{\ell}\right\rangle\\
       \leq &~2 L_{\hat f} D \left\|\hat x^{\ell} - x^{\ell}\right\|\\
       \leq &~{4 L_{\hat f} D^2}/(\ell+1),
   \end{align*} where the first inequality comes from \eqref{hatxoptcond}; the second inequality is due to the Cauchy-Schwarz inequality, the definition of $D$ in \eqref{Dx}, and the fact that $\nabla \hat f(x)$ (with respect to $x$) is $L_{\hat f}-$Lipschitz continuous; the third inequality follows from \eqref{hatxx}. Taking the maximum over $x\in\cal X$ in the above inequality leads to the desired result \eqref{ratenes}. \hfill$\Box$
\end{proof}

\begin{algorithm2e}[ht]
\caption{The Frank-Wolfe algorithm {for solving AL subproblem \eqref{subproblemcgal}} \cite{nesterov15nonsmooth}}
\label{alg:nonsmoothCG}
\SetKwInOut{Input}{input}\SetKwInOut{Output}{output}
\SetKwComment{Comment}{}{}
\BlankLine
Initialize ${x^{k,0}\in \X}$.\\
\For{${\ell\geq 0}$:}{
Set $\gamma^{\ell} = 2/(\ell+2)$\;
Compute \begin{equation}\label{vl}\upsilon^{\ell} = \arg\min_{x\in\mathbb{R}^{n}}\left\{\left\langle \nabla \hat f_{\beta}(x^{k,\ell};\lambda^k),\,x-x^{k,\ell}\right\rangle+g(x)\right\};\end{equation}\label{line:vk}
Update
$x^{k, \ell+1} = x^{k, \ell} + \gamma^{\ell} (\upsilon^{\ell} - x^{k,\ell})$\;
}
\end{algorithm2e}

\begin{theorem}\label{ThmFWInner}
  Let $\left\{x^{k,\ell}\right\}_{\ell\geq 1}$ be the sequence generated by the Frank-Wolfe method (i.e., Algorithm 3) when applied to solve AL subproblem \eqref{subproblemcgal}, where $\ell$ is the index of the inner iteration. 
  Then there exists $1\leq \hat \ell \leq \ell$ such that
  \begin{equation}\label{ratefw}\max_{x\in\mathbb{R}^{n}}\left\{\left\langle \nabla \hat f_{\beta}(x^{k,\hat \ell};\lambda^k),\,x^{k,\hat \ell} - x\right\rangle + g(x^{k,\hat \ell})-g(x)\right\}\leq \frac{6L_{\hat f} D^2}{\ell + 2},~\forall~\ell \geq 1.\end{equation}
  In other words, it takes at most \begin{equation}\label{numberfw}\left\lceil\frac{6L_{\hat f}D^2}{\eta_k}\right\rceil{-2}\end{equation} Frank-Wolfe iterations to find the point $x^{k+1}$ satisfying \eqref{violationcgal}.
\end{theorem}
\begin{proof}
For simplicity, denote $\hat f_{\beta}(x; \lambda)$ by $\hat f(x),$ $\LCal_{\beta}(x; \lambda)$ by $\LCal(x),$ and $x^{k,\ell}$ by $x^{\ell}$ for all ${\ell\geq 0}$ in the proof. {Define $$\Delta^{\ell}=\LCal(x^{\ell})-\LCal(x(\lambda^k))$$ and
\begin{equation*}\label{Vk}
  V^{\ell}=\left\langle\nabla \hat f(x^{\ell}),\,x^{\ell}-\upsilon^{\ell}\right\rangle+g(x^{\ell})-g(\upsilon^{\ell}).
\end{equation*}}From the convexity of $\hat f(x)$ and the definition of $\upsilon^{\ell}$ in \eqref{vl}, we get
\begin{equation}\label{FV}\Delta^{\ell}\leq \left\langle\nabla \hat f(x^{\ell}),\,x^{\ell} - x(\lambda^k)\right\rangle +g(x^{\ell}) - g(x(\lambda^k)) \leq V^{\ell}.\end{equation}
 {Recall $x^{\ell+1}=x^{\ell} + \gamma^{\ell} (\upsilon^{\ell} - x^{\ell}).$} Since $\nabla \hat f(x)$ is $L_{\hat f}$-Lipschitz continuous, it follows that
\begin{equation}\label{ineqfLip}
  \hat f(x^{\ell+1})\leq \hat f(x^{\ell})+\gamma^{\ell}\left\langle\nabla \hat f(x^{\ell}), \upsilon^{\ell} - x^{\ell}\right\rangle +\frac{L_{\hat f}\left(\gamma^{\ell}\right)^2}{2}\left\|\upsilon^{\ell}-x^{\ell}\right\|^2.
\end{equation}
By the convexity of $g(x),$ we get 
\begin{equation}\label{ineqgconvex}
  g(x^{\ell+1})\leq \gamma^{\ell} g(\upsilon^{\ell})+\left(1-\gamma^{\ell}\right) g(x^{\ell}).
\end{equation}
Combining \eqref{ineqfLip} and \eqref{ineqgconvex}, 
we have, for ${\ell=0,1,...,}$
\begin{align}\label{ineqhatF}
  \LCal(x^{\ell+1})\leq \LCal(x^{\ell})-\gamma^{\ell} V^{\ell}+\frac{L_{\hat f}\left(\gamma^{\ell}\right)^2}{2}\left\|\upsilon^{\ell}-x^{\ell}\right\|^2,
\end{align}
which, together with \eqref{FV}, implies
\begin{equation*}
\Delta^{\ell+1}\leq \left(1-\gamma^{\ell}\right)\Delta^{\ell}+\frac{L_{\hat f}\left(\gamma^{\ell}\right)^2}{2}\left\|\upsilon^\ell-x^\ell\right\|^2
\end{equation*}
 and thus
 \begin{equation*}\label{decreasehatF}\Delta^{\ell+1}\leq \frac{\ell}{\ell+2}\Delta^{\ell}+\left(\frac{2}{\ell+2}\right)^2\frac{L_{\hat f}D^2}{2}.\end{equation*}
By mathematical induction, it can be verified that
\begin{equation}\label{cgfunction}{\Delta^{\ell}}\leq \frac{2L_{\hat f}D^2}{\ell+2},~\ell\geq 1.\end{equation}

{Based on \eqref{cgfunction}, we can use the same argument (by contradiction) as in \cite{Jaggi13,mu14matrixrecovery} to show the desired result \eqref{ratefw}. The idea is to show that
$\left\{V^m\right\}_{m=1}^\ell$ cannot stay large over many consecutive iterations. For completeness, we give the details below. Denote
$$\hat C=2L_{\hat f}D^2,~\ell_1=\lceil\frac{\ell}{2}\rceil,~\text{and}~\nu=\frac{\ell_1+1}{\ell+2}.$$ Suppose on the contrary that
\begin{equation}\label{contrary}
  V^{m} > \frac{3\hat C}{\ell+2}, \forall~m = \ell_1, \ell_1+1,\ldots, \ell.
\end{equation} From \eqref{ineqhatF}, we have, 
$$\Delta^{\ell+1} \leq \Delta^{\ell} - \frac{2V^{\ell}}{\ell+2} + \frac{\hat C}{\left(\ell+2\right)^2},~{\ell\geq0}.$$
Summing this inequality for indices from $\ell_1$ to $\ell$ yields
  \begin{eqnarray}
    \Delta^{\ell+1} &\leq & \Delta^{\ell_1}- \sum_{m=\ell_1}^{\ell} \frac{2V^{m}}{m+2} + \sum_{m=\ell_1}^{\ell} \frac{\hat C}{\left(m+2\right)^2} \nonumber \\
                    &< & \Delta^{\ell_1}- \frac{6\hat C}{\ell+2}\sum_{m=\ell_1+2}^{\ell+2} \frac{1}{m} + \sum_{m=\ell_1+2}^{\ell+2} \frac{\hat C}{m^2} \nonumber \\
                    &\leq & \frac{\hat C}{\nu\left(\ell+2\right)} - \frac{6\hat C}{\ell+2} \frac{\ell-\ell_1+1}{\ell+2} + \frac{\hat C\left(\ell-\ell_1+1\right)}{\left(\ell+2\right)\left(\ell_1+1\right)} \nonumber \\
                    & = & \frac{\hat C}{\nu\left(\ell+2\right)}\left(2-6\nu\left(1-\nu\right)-\nu\right), \label{ineqcontrary}
  \end{eqnarray}
where the second inequality is due to \eqref{contrary}, and the third inequality is due to \eqref{cgfunction} and the fact $\sum_{m=a}^b\frac{1}{k^2}\leq \frac{b-a+1}{b(a-1)}$ for any $b\geq a >1.$ Define $\phi(x)=2-6x\left(1-x\right)-x.$ Since $\nu\in[\frac{1}{2}, \frac{2}{3}],$ it follows from \eqref{ineqcontrary} that $\Delta^{\ell+1}<\frac{\hat C}{\nu\left(\ell+2\right)}\phi(\nu)\leq 0,$ which is a contradiction. The proof is completed.} \hfill$\Box$
\end{proof}

The convergence rate result \eqref{cgfunction} is not new when $g(x)=\text{Ind}_{\X}(x)$; see \cite{Jaggi13,lan14cgs,mu14matrixrecovery} and references therein. For the general composite minimization case, Nesterov proved $${\LCal(x^{\ell})}-\LCal(x(\lambda^k))\leq \frac{4L_{\hat f}D^2}{{\ell+1}},~\ell\geq 1;$$ {see Eq. (2.16) in \cite{nesterov15nonsmooth}}. {Roughly speaking}, our result \eqref{cgfunction} improves the above bound by a factor of two. 

Compared to {accelerated proximal gradient methods} when applied to solve AL subproblem \eqref{subproblemcgal}, the Frank-Wolfe methods generally need {more iterations} to find the point $x^{k+1}$ satisfying \eqref{violationcgal}, while the computational cost per iteration in the Frank-Wolfe methods is generally cheaper.}
\section{Global convergence}\label{subsec:convergence}

{In this {section}, we present the global convergence result of our IAL framework (Algorithm \ref{alg:cgal}), which {is independent of} the methods used to find the point $x^{k+1}$ satisfying \eqref{violationcgal}.} Theorem \ref{thm-convergence} shows global convergence of the IAL framework under the assumption that the nonnegative sequence $\left\{\eta_k\right\}$ is summable.

{It is worth remarking that \eqref{upper_bound_dual}, \eqref{decrease}, and \eqref{gradient}, which build a bridge between the exact dual function value and dual gradient and the approximate ones, are crucial for establishing global convergence and non-ergodic convergence rate results in this paper. We shall show that condition \eqref{violationcgal} {implies} \eqref{upper_bound_dual}, \eqref{decrease}, and \eqref{gradient}; see the proofs of Lemma \ref{lemma_inequality} and Lemma \ref{lemma-ggap}. Clearly, condition \eqref{violationcgal} can be replaced with some other conditions (e.g., condition \eqref{neseta}) in Algorithm \ref{alg:cgal} and global convergence and non-ergodic convergence results of Algorithm \ref{alg:cgal} will still follow as long as the new conditions imply \eqref{upper_bound_dual}, \eqref{decrease}, and \eqref{gradient}. }

%


For the ease of presentation, we define
\begin{align}
  x(\lambda^k):=&\arg\min_{x\in\mathbb{R}^{n}} \LCal_\beta(x;\lambda^k),~k=1,2,\ldots,\label{xlambda}\\
  \nabla d(\lambda^k):=&Ax(\lambda^{k})-b,~k=1,2,\ldots,\nonumber\\
  \bar d(\lambda^k):=&\LCal_{\beta}(x^{k+1};\lambda^k),~k=1,2,\ldots,\label{bard}\\
  \nabla \bar d(\lambda^k):=&Ax^{k+1}-b,~k=1,2,\ldots,\label{bardgradient}
\end{align}
%
%
where $x^{k+1}$ is generated by Algorithm \ref{alg:cgal} and satisfies \eqref{violationcgal}. 

\medskip
We first prove the following two lemmas (Lemma \ref{lemma_inequality} and Lemma \ref{lemma-ggap}), which have been proved for smooth function $F(x)$ in \cite{nesterov14inexact,necoara14siamopt,lan2015iteration}. We now extend them to {composite} nonsmooth function $F(x).$ In particular, Lemma \ref{lemma_inequality} shows that $d(\lambda^{k+1})$ can be bounded from both above and below and Lemma \ref{lemma-ggap} shows that $\left\|\nabla d(\lambda^k)-\nabla\bar d(\lambda^k)\right\|$ is bounded by $\sqrt{\eta_k/\beta}.$

\begin{lemma}\label{lemma_inequality}
The following two inequalities hold:
{ \begin{equation}\label{upper_bound_dual}
  d(\lambda) \leq \bar d(\mu) + \left\langle\nabla\bar d(\mu),\lambda-\mu\right\rangle,~\forall~\lambda,\mu,
\end{equation}}
and
{\begin{equation}\label{decrease}d(\lambda^{k+1})\geq\bar d(\lambda^k)+\frac{\beta}{2}\left\|\nabla \bar d(\lambda^k)\right\|^2-\eta_k.\end{equation}}
\end{lemma}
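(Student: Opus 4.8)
The plan is to establish the two inequalities separately, since they have quite different flavors: \eqref{upper_bound_dual} is a pure consequence of the affine dependence of the AL function on the multiplier and needs no inexactness, whereas \eqref{decrease} is where the termination criterion \eqref{violationcgal} and the penalty term do the real work.

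First I would prove \eqref{upper_bound_dual}. Let $x_\mu$ be the inexact minimizer defining $\bar d(\mu)$ through \eqref{bard}, so that $\nabla\bar d(\mu)=Ax_\mu-b$ by \eqref{bardgradient}. Since $d(\lambda)=\min_x\LCal_\beta(x;\lambda)\le\LCal_\beta(x_\mu;\lambda)$, and since $\hat f_\beta(x;\lambda)$ (hence $\LCal_\beta(x;\lambda)$) is \emph{affine} in $\lambda$ for fixed $x$, with $\LCal_\beta(x;\lambda)-\LCal_\beta(x;\mu)=\langle Ax-b,\,\lambda-\mu\rangle$, evaluating at $x=x_\mu$ yields $d(\lambda)\le\bar d(\mu)+\langle\nabla\bar d(\mu),\,\lambda-\mu\rangle$, which is exactly \eqref{upper_bound_dual}.

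For \eqref{decrease}, write $\bar x:=x^{k+1}$ and $g^k:=\nabla\bar d(\lambda^k)=A\bar x-b$. Using the update \eqref{lambda}, $\lambda^{k+1}=\lambda^k+\beta g^k$, so a direct substitution into \eqref{augLag}--\eqref{hatf} gives, for every $x$, $\LCal_\beta(x;\lambda^{k+1})=\LCal_\beta(x;\lambda^k)+\beta\langle g^k,\,Ax-b\rangle$. The goal is to bound $\min_x$ of the right-hand side from below. To this end I would split $\hat f_\beta(x;\lambda^k)=p(x)+\tfrac{\beta}{2}\|Ax-b\|^2$ with $p(x):=f(x)+\langle\lambda^k,Ax-b\rangle$ convex, so that $\nabla\hat f_\beta(\bar x;\lambda^k)=\nabla p(\bar x)+\beta A^{T}g^k$. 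Convexity of $p$ gives $p(x)\ge p(\bar x)+\langle\nabla p(\bar x),\,x-\bar x\rangle$, while \eqref{violationcgal}, after substituting this gradient and using $\langle A^{T}g^k,\bar x-x\rangle=\|g^k\|^2-\langle g^k,Ax-b\rangle$, rearranges to $\langle\nabla p(\bar x),x-\bar x\rangle+g(x)\ge g(\bar x)+\beta\|g^k\|^2-\beta\langle g^k,Ax-b\rangle-\eta_k$. Substituting these into the expansion of $\LCal_\beta(x;\lambda^{k+1})$, the cross term $\beta\langle g^k,Ax-b\rangle$ cancels the one coming from the multiplier update, the leftover $\tfrac{\beta}{2}\|Ax-b\|^2$ is nonnegative and is simply dropped, and recalling $\bar d(\lambda^k)=p(\bar x)+g(\bar x)+\tfrac{\beta}{2}\|g^k\|^2$ one gets $\LCal_\beta(x;\lambda^{k+1})\ge\bar d(\lambda^k)+\tfrac{\beta}{2}\|g^k\|^2-\eta_k$ for all $x$; taking the minimum over $x$ gives \eqref{decrease}.

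The hard part will be the bookkeeping in \eqref{decrease}: the crucial (and easy-to-miss) point is that $\nabla\hat f_\beta$ carries the penalty gradient $\beta A^{T}(Ax-b)$, which is precisely what makes the inexactness condition \eqref{violationcgal} produce the cross term $-\beta\langle g^k,Ax-b\rangle$ that cancels the $+\beta\langle g^k,Ax-b\rangle$ created by the dual step $\lambda^{k+1}=\lambda^k+\beta g^k$. Equally important is to retain the quadratic penalty $\tfrac{\beta}{2}\|Ax-b\|^2$ throughout and discard it (by nonnegativity) only at the very end; if one linearized or dropped it earlier, the minimization over the unconstrained $x\in\mathbb{R}^{n}$ would be unbounded below and the lower bound would collapse to $-\infty$. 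Everything else is convexity and routine algebra.
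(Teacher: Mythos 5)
Your proposal is correct and follows essentially the same route as the paper: part \eqref{upper_bound_dual} via the affine dependence of $\LCal_\beta(x;\cdot)$ evaluated at $x_\mu$, and part \eqref{decrease} via linearizing the smooth part at $x^{k+1}$ by convexity, invoking \eqref{violationcgal}, and discarding the nonnegative quadratic penalty before minimizing over $x$. The only difference is cosmetic bookkeeping — you split off $\beta A^{T}(Ax^{k+1}-b)$ from $\nabla\hat f_{\beta}$ and cancel the cross term explicitly, whereas the paper completes the square into $\beta\bigl(\langle\nabla\bar d(\lambda^k),Ax-b\rangle+\tfrac{1}{2}\|(Ax-b)-\nabla\bar d(\lambda^k)\|^2\bigr)$ and minimizes each piece separately.
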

\begin{proof} We first show \eqref{upper_bound_dual}.
{By the definitions of $d(\lambda)$ and $\LCal_{\beta}(x;\lambda)$, we have
$$d(\lambda)=\min_{x\in\mathbb{R}^{n}}\left\{\LCal_{\beta}(x;\lambda)\right\}\leq \LCal_{\beta}(x_{\mu};\lambda)=\LCal_{\beta}(x_{\mu};\mu) + \langle\lambda-\mu,Ax_{\mu}-b\rangle,$$ where $x_{\mu}$ satisfies $\bar d(\mu)=\LCal_{\beta}(x_{\mu};\mu).$
This, together with the definition of $\nabla \bar d(\mu)$ (cf. \eqref{bard}), yields \eqref{upper_bound_dual}.}

{We now prove \eqref{decrease}. By the convexity of $f(x),$ the definition of $\nabla \bar d(\lambda^k),$ and \eqref{lambda}, we get
 \begin{align*}
   \LCal_{\beta}(x;\lambda^{k+1}) \geq  &\  f(x^{k+1})+\left\langle\nabla f(x^{k+1}),x-x^{k+1}\right\rangle+g(x)+\left\langle \lambda^{k+1}, Ax-b \right\rangle + \frac{\beta}{2} \left\|Ax-b\right\|^2\\
                       = &\  \LCal_{\beta}(x^{k+1};\lambda^{k})+\beta\left(\left\langle\nabla \bar d(\lambda^k), Ax-b\right\rangle+\frac{1}{2}\left\|\left(Ax-b\right)-\nabla \bar d(\lambda^k)\right\|^2\right)\\
                         &\  +\left\langle\nabla \hat f_{\beta}(x^{k+1};\lambda^k),x-x^{k+1}\right\rangle+g(x)-g(x^{k+1}).
 \end{align*}
 Taking the minimum over $x\in\mathbb{R}^{n}$ on both sides of the above inequality, we have
  \begin{align*}
   d(\lambda^{k+1}) \geq  &\  \LCal_{\beta}(x^{k+1};\lambda^{k})+\beta\min_{x\in\mathbb{R}^{n}}\left\{\left\langle\nabla \bar d(\lambda^k), Ax-b\right\rangle+\frac{1}{2}\left\|\left(Ax-b\right)-\nabla \bar d(\lambda^k)\right\|^2\right\}\\
                         & +\min_{x\in\mathbb{R}^{n}}\left\{\left\langle\nabla \hat f_{\beta}(x^{k+1};\lambda^k),x-x^{k+1}\right\rangle+g(x)-g(x^{k+1})\right\}.
 \end{align*}
By using the definition of $\bar d(\lambda)$ and \eqref{violationcgal}, we immediately get the desired result \eqref{decrease}. \hfill$\Box$}
\end{proof}

\begin{lemma}\label{lemma-ggap}
The following inequality holds:
\begin{equation}\label{gradient}
  \left\|\nabla d(\lambda^k)-\nabla\bar d(\lambda^k)\right\|^2\leq \frac{\eta_k}{\beta}.
\end{equation}
\end{lemma}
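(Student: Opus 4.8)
The plan is to bound the quantity of interest by noting that
$$\left\|\nabla d(\lambda^k)-\nabla\bar d(\lambda^k)\right\|^2=\left\|A\bigl(x^{k+1}-x(\lambda^k)\bigr)\right\|^2,$$
and then to extract the factor $\beta\|A(\cdot)\|^2$ from the quadratic penalty term in $\hat f_\beta$. The two ingredients are the exact optimality condition for $x(\lambda^k)$ and the termination criterion \eqref{violationcgal}, and the point of the argument is that these two, taken together, cancel the uncontrolled nonsmooth $g$-differences and leave a clean gradient-monotonicity inequality.

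First I would record the first-order optimality condition for the exact minimizer $x(\lambda^k)$ of $\LCal_\beta(\cdot;\lambda^k)$, namely $-\nabla \hat f_{\beta}(x(\lambda^k);\lambda^k)\in\partial g(x(\lambda^k))$. Evaluating the corresponding subgradient inequality at $x=x^{k+1}$ gives
$$g(x^{k+1})-g(x(\lambda^k))\geq -\left\langle\nabla \hat f_{\beta}(x(\lambda^k);\lambda^k),\,x^{k+1}-x(\lambda^k)\right\rangle.$$
Next I would specialize the maximum in \eqref{violationcgal} to the single admissible choice $x=x(\lambda^k)$, which yields
$$\left\langle\nabla \hat f_{\beta}(x^{k+1};\lambda^k),\,x^{k+1}-x(\lambda^k)\right\rangle+g(x^{k+1})-g(x(\lambda^k))\leq \eta_k.$$
Using the first inequality to lower-bound $g(x^{k+1})-g(x(\lambda^k))$ in the second eliminates the $g$-terms and produces the key estimate
$$\left\langle\nabla \hat f_{\beta}(x^{k+1};\lambda^k)-\nabla \hat f_{\beta}(x(\lambda^k);\lambda^k),\,x^{k+1}-x(\lambda^k)\right\rangle\leq \eta_k.$$

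The final step is to expand the left-hand side using $\nabla \hat f_{\beta}(x;\lambda^k)=\nabla f(x)+A^{T}\lambda^k+\beta A^{T}(Ax-b)$. The inner product then splits as
$$\left\langle\nabla f(x^{k+1})-\nabla f(x(\lambda^k)),\,x^{k+1}-x(\lambda^k)\right\rangle+\beta\left\|A\bigl(x^{k+1}-x(\lambda^k)\bigr)\right\|^2,$$
where the first term is nonnegative by convexity (monotonicity of $\nabla f$). Dropping it gives $\beta\|A(x^{k+1}-x(\lambda^k))\|^2\leq\eta_k$, and since $A(x^{k+1}-x(\lambda^k))=(Ax^{k+1}-b)-(Ax(\lambda^k)-b)=\nabla\bar d(\lambda^k)-\nabla d(\lambda^k)$, this is exactly \eqref{gradient}.

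I expect the only delicate point to be the treatment of the nonsmooth part $g$. The whole argument hinges on the fact that \eqref{violationcgal} is a maximum over all $x$, so it holds in particular at $x=x(\lambda^k)$; pairing that instance with the exact subgradient inequality is precisely what is needed to remove the $g$-differences. Once that cancellation is in place, the remainder is the routine observation that the penalty term $\tfrac{\beta}{2}\|Ax-b\|^2$ injects the $\beta\|A(\cdot)\|^2$ that dominates $\eta_k$.
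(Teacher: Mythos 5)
Your proof is correct and follows essentially the same route as the paper's: specialize \eqref{violationcgal} at $x=x(\lambda^k)$, add the optimality (variational/subgradient) inequality for $x(\lambda^k)$ to cancel the $g$-terms, and drop the monotone $\nabla f$ part to isolate $\beta\left\|A\left(x^{k+1}-x(\lambda^k)\right)\right\|^2\leq\eta_k$. The only cosmetic difference is that you derive the first inequality from $-\nabla\hat f_{\beta}(x(\lambda^k);\lambda^k)\in\partial g(x(\lambda^k))$ while the paper states the equivalent variational inequality directly.
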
\begin{proof} It follows from the optimality of $x(\lambda^k)$ (cf. \eqref{xlambda}) that
$$\left\langle\nabla\hat f_{\beta}(x(\lambda^k);\lambda^k),x(\lambda^k)-x^{k+1}\right\rangle+g(x(\lambda^k))-g(x^{k+1})\leq 0.$$ By setting $x=x(\lambda^k)$ in \eqref{violationcgal}, we get $$\left\langle \nabla\hat f_{\beta}(x^{k+1};\lambda^k),x^{k+1}-x(\lambda^k)\right\rangle-g(x(\lambda^k))+g(x^{k+1})\leq \eta_k.$$
Adding the above two inequalities yields
\begin{align}
  \eta_k\geq & \left\langle \nabla\hat f_{\beta}(x^{k+1};\lambda^k)-\nabla\hat f_{\beta}(x(\lambda^k);\lambda^k),x^{k+1}-x(\lambda^k)\right\rangle\nonumber\\
         =   & \left\langle \nabla f(x^{k+1})+ \beta A^T(Ax^{k+1}-b)-\nabla f(x(\lambda^k)) - \beta A^T(Ax(\lambda^k)-b) ,x^{k+1}-x(\lambda^k)\right\rangle\nonumber\\
\geq   & \left\langle \beta A^T(Ax^{k+1}-b) - \beta A^T(Ax(\lambda^k)-b) ,x^{k+1}-x(\lambda^k)\right\rangle\nonumber\\
= & \beta\left\|\nabla d(\lambda^k)-\nabla\bar d(\lambda^k)\right\|^2,\nonumber
\end{align}where the second inequality is due to the convexity of $f(x)$ and the second equality is due to the definitions of $\nabla d(\lambda^k)$ and $\nabla\bar d(\lambda^k).$ \hfill$\Box$ 
\end{proof}
The following Lemma \ref{lemma:lambda} shows that the sequence $\left\{\lambda^k\right\}$ generated by Algorithm \ref{alg:cgal} is bounded.
\begin{lemma}\label{lemma:lambda}Let $\left\{\lambda^k\right\}$ be generated by Algorithm \ref{alg:cgal}. Suppose the sequence $\left\{\eta_k\right\}$ satisfies \eqref{summable}, then {\begin{equation}\label{boundlambda}\left\|\lambda^k-\lambda^*\right\|\leq B,~k=1,2,\ldots,\end{equation}where 
\begin{equation}\label{B}
  B:=\sqrt{\left\|\lambda^1-\lambda^*\right\|^2+{2\beta}\sum_{k=1}^{+\infty}\eta_k}<+\infty.
\end{equation}
}
\end{lemma}
\begin{proof}
We have
{\begin{align}
  \left\|\lambda^{k+1}-\lambda^*\right\|^2=&\left\|\lambda^k-\lambda^*+{\beta}\nabla\bar d(\lambda^k)\right\|^2\nonumber\\
                               =&\left\|\lambda^k-\lambda^*\right\|^2+{2\beta}\left\langle\nabla\bar d(\lambda^k), \lambda^k-\lambda^*\right\rangle+{\beta^2}\left\|\nabla\bar d(\lambda^k)\right\|^2\nonumber\\
                               \leq & \left\|\lambda^k-\lambda^*\right\|^2+{2\beta}\left(\bar d(\lambda^k)-d(\lambda^*)\right)+{\beta^2}\left\|\nabla\bar d(\lambda^k)\right\|^2\nonumber\\
                               = & \left\|\lambda^k-\lambda^*\right\|^2+{2\beta}\left(d(\lambda^{k+1})-d(\lambda^*)\right)+{2\beta}\left(\bar d(\lambda^k)-d(\lambda^{k+1})\right) +{\beta^2}\left\|\nabla\bar d(\lambda^k)\right\|^2\nonumber\\
                               \leq & \left\|\lambda^k-\lambda^*\right\|^2+{2\beta}\left(d(\lambda^{k+1})-d(\lambda^*)\right)+{2\beta}\eta_k\nonumber\\
                               \leq & \left\|\lambda^k-\lambda^*\right\|^2+{2\beta}\eta_k,\nonumber
\end{align}}where the first inequality is due to \eqref{upper_bound_dual} (with $\lambda$ and $\mu$ replaced by $\lambda^*$ and $\lambda^k$ respectively), the second inequality is due to \eqref{decrease}, and the last inequality is due to the fact that $d(\lambda^{k+1})\leq d(\lambda^*)$ for all $k\geq 1.$ Summing the above inequality, we obtain
$$\left\|\lambda^k-\lambda^*\right\|^2\leq \left\|\lambda^1-\lambda^*\right\|^2+{2\beta}\sum_{i=1}^{k-1}\eta_i,~k=1,2,\ldots,$$
{which, {together with \eqref{B}}, completes the proof.} \hfill$\Box$
\end{proof}

{Before presenting the main result of this section, i.e., the global convergence result of our IAL framework, we define}
\begin{equation}\label{theta}
    \theta:=\frac{\beta}{4B^2},
\end{equation}where $B$ is given in \eqref{B}.

\begin{theorem}\label{thm-convergence} Let $\left\{x^k\right\}$ and $\left\{\lambda^k\right\}$ be generated by Algorithm \ref{alg:cgal}. Suppose the nonnegative sequence $\left\{\eta_k\right\}$ satisfies \begin{equation}\label{summable}
    \sum_{k=1}^{+\infty}\eta_k<+\infty.
  \end{equation} Then, $$\delta_k:=d\left(\lambda^*\right)-d(\lambda^k)\rightarrow 0~\text{and}~\left\|Ax^{k+1}-b\right\|\rightarrow 0,$$ where {$\lambda^*$ is an optimal solution to problem \eqref{dual}} and $d(\lambda)$ is defined in \eqref{subproblem}.
\end{theorem}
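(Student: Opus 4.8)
The plan is to read Algorithm \ref{alg:cgal} as an \emph{inexact supergradient ascent} on the concave dual $d$, and to extract from the stopping rule \eqref{violationcgal} two quantitative facts. First, since $\hat f_{\beta}(\cdot;\lambda^k)$ is convex, the gradient inequality turns \eqref{violationcgal} into the subproblem suboptimality bound $\LCal_\beta(x^{k+1};\lambda^k)-d(\lambda^k)\le\eta_k$; combining this with the optimality of the exact minimizer $x(\lambda^k)$ and the $\beta$-strong convexity of the penalty $\tfrac{\beta}{2}\|Ax-b\|^2$ in the variable $Ax$ yields the quadratic-growth estimate $\tfrac{\beta}{2}\|A(x^{k+1}-x(\lambda^k))\|^2\le\eta_k$. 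Writing $g^k:=\nabla d(\lambda^k)=Ax(\lambda^k)-b$ and $\tilde g^k:=Ax^{k+1}-b$, this says the gradient error $e^k:=\tilde g^k-g^k$ satisfies $\tfrac{\beta}{2}\|e^k\|^2\le\eta_k$. Second, shifting the multiplier in $\LCal_\beta(x^{k+1};\cdot)$ and using the same suboptimality bound would give the approximate supergradient inequality
\begin{equation*}
d(\lambda)-d(\lambda^k)\le\langle \tilde g^k,\,\lambda-\lambda^k\rangle+\eta_k\qquad\text{for all }\lambda,
\end{equation*}
which is presumably the content of Lemma \ref{lemma-ggap}.

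The core of the argument is a one-step energy recursion for $\delta_k=d(\lambda^*)-d(\lambda^k)\ge0$. Using the standard fact that $\nabla d$ is $\tfrac{1}{\beta}$-Lipschitz and that the update \eqref{lambda} reads $\lambda^{k+1}-\lambda^k=\beta\tilde g^k$, the ascent lemma gives $d(\lambda^{k+1})\ge d(\lambda^k)+\beta\langle g^k,\tilde g^k\rangle-\tfrac{\beta}{2}\|\tilde g^k\|^2$. The pleasant point is that after substituting $\tilde g^k=g^k+e^k$ the first-order cross terms cancel \emph{exactly}, leaving $d(\lambda^{k+1})-d(\lambda^k)\ge\tfrac{\beta}{2}\|g^k\|^2-\tfrac{\beta}{2}\|e^k\|^2$, i.e.
\begin{equation*}
\delta_{k+1}\le\delta_k-\tfrac{\beta}{2}\|g^k\|^2+\tfrac{\beta}{2}\|e^k\|^2\le\delta_k-\tfrac{\beta}{2}\|g^k\|^2+\eta_k.
\end{equation*}
Since $\sum_k\eta_k<\infty$ by \eqref{summable}, a Robbins--Siegmund/telescoping argument (monitoring $u_k:=\delta_k+\sum_{j\ge k}\eta_j$) shows that $\{\delta_k\}$ converges to some $\delta_\infty\ge0$ and that $\sum_k\|g^k\|^2<\infty$, whence $\|g^k\|=\|Ax(\lambda^k)-b\|\to0$. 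Because $\eta_k\to0$ forces $\|e^k\|\to0$, the triangle inequality $\|\tilde g^k\|\le\|g^k\|+\|e^k\|$ then gives $\|Ax^{k+1}-b\|\to0$, the second assertion.

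It remains to upgrade the convergence of $\{\delta_k\}$ to $\delta_\infty=0$. For this I would first show $\{\lambda^k\}$ is bounded by a Fej\'er-type computation: expanding $\|\lambda^{k+1}-\lambda^*\|^2$ and inserting the supergradient inequality at $\lambda=\lambda^*$ gives $\tfrac{1}{2\beta}\big(\|\lambda^{k+1}-\lambda^*\|^2-\|\lambda^k-\lambda^*\|^2\big)\le\eta_k-\delta_k+\tfrac{\beta}{2}\|\tilde g^k\|^2\le\eta_k+\tfrac{\beta}{2}\|\tilde g^k\|^2$, and the right-hand side is summable because $\sum_k\|\tilde g^k\|^2\le2\sum_k\|g^k\|^2+\tfrac{4}{\beta}\sum_k\eta_k<\infty$; hence $\|\lambda^k-\lambda^*\|$ stays bounded. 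Extracting a subsequence $\lambda^{k_j}\to\bar\lambda$ and using continuity of $\nabla d$ together with $\|g^{k_j}\|\to0$ yields $\nabla d(\bar\lambda)=0$, so $\bar\lambda$ maximizes $d$ and $\delta_{k_j}\to d(\lambda^*)-d(\bar\lambda)=0$; since the whole sequence $\{\delta_k\}$ already converges, $\delta_\infty=0$.

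The main obstacle is the mismatch between the summable budget $\sum_k\eta_k<\infty$ and the naive gradient-error bound $\|e^k\|=O(\sqrt{\eta_k})$, which is \emph{not} summable and would only yield convergence to a neighborhood of optimality. The resolution, and the crux of the proof, is that the error enters the ascent inequality only through the \emph{quadratic} term $\tfrac{\beta}{2}\|e^k\|^2\le\eta_k$ once the first-order cross terms cancel; this is precisely why the weaker, cheaper stopping rule \eqref{violationcgal} still suffices for exact asymptotic feasibility and dual optimality. A secondary point requiring care is that the recursion by itself only delivers convergence of $\{\delta_k\}$, so boundedness of the multiplier sequence is genuinely needed to identify the limit as zero.
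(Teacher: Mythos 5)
Your proposal is correct, but it follows a genuinely different route from the paper's. The paper never invokes the descent lemma for the $\tfrac{1}{\beta}$-smooth dual at the ascent step; instead it derives the sufficient-increase inequality $d(\lambda^{k+1})\ge \LCal_\beta(x^{k+1};\lambda^k)+\tfrac{\beta}{2}\|Ax^{k+1}-b\|^2-\eta_k$ directly from the stopping rule \eqref{violationcgal} by minimizing the AL function at $\lambda^{k+1}$ over $x$ (Lemma \ref{lemma_inequality}), and it bounds the gradient error by $\|\nabla d(\lambda^k)-\nabla\bar d(\lambda^k)\|^2\le \eta_k/\beta$ via monotonicity of $\nabla\hat f_\beta$ (Lemma \ref{lemma-ggap}) rather than via your function-value-gap-plus-strong-convexity-in-$Ax$ argument (which gives the slightly weaker $2\eta_k/\beta$ but is equally valid). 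The more substantive divergence is in how the dual gap is driven to zero: the paper combines the ascent inequality with $\delta_k\le B\|\nabla d(\lambda^k)\|$ (from concavity and the boundedness Lemma \ref{lemma:lambda}) to obtain the self-contained recursion $\delta_{k+1}\le\delta_k-\theta\delta_k^2+\tfrac{3}{2}\eta_k$, whence $\sum_k\delta_k^2<+\infty$ and $\delta_k\to0$ immediately — and this same recursion is then the engine of Theorems \ref{thm-rate} and \ref{thm-rate-global}. You instead stop at "$\{\delta_k\}$ converges and $\sum_k\|\nabla d(\lambda^k)\|^2<+\infty$" and identify the limit as zero by a compactness/subsequence argument using continuity of $\nabla d$. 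Both are sound (your Fej\'er step is not circular, since $\sum_k\|\nabla d(\lambda^k)\|^2<+\infty$ is obtained before boundedness is needed), but the paper's recursion buys the rate results for free, whereas your argument delivers only the qualitative convergence asserted in this theorem. Two cosmetic points: the approximate supergradient inequality you use is the content of \eqref{upper_bound_dual} combined with $\bar d(\lambda^k)\le d(\lambda^k)+\eta_k$, not of Lemma \ref{lemma-ggap}; and your cancellation-of-cross-terms observation, while elegant, is essentially equivalent to the paper's use of $a^2\ge b^2/2-(a-b)^2$ with slightly better constants.
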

\begin{proof}
  It suffices to show \begin{equation}\label{deltasummable}\sum_{k} \delta_k^2<+\infty,\end{equation} and \begin{equation}\label{axksummable}\sum_{k} \left\|Ax^{k+1}-b\right\|^2<+\infty.
 \end{equation} 

By \eqref{decrease} and the definition of $\bar d(\lambda^k),$ we obtain
\begin{equation}\label{dual_descent}d(\lambda^{k+1})\geq d(\lambda^k)+\frac{\beta}{2}\left\|\nabla \bar d(\lambda^k)\right\|^2-\eta_k,\end{equation}
which, together with \eqref{gradient} and the inequality $a^2\geq b^2/2-(a-b)^2,$ implies
\begin{equation}\label{key_descent}d(\lambda^{k+1})\geq d(\lambda^k)+\frac{\beta}{4}\left\|\nabla d(\lambda^k)\right\|^2-\frac{3}{2}\eta_k.\end{equation}
Moreover, it follows from \eqref{boundlambda} and the concavity of $d(\lambda)$ that
$$d(\lambda^*)-d(\lambda^k)\leq \left\langle\nabla d(\lambda^k), \lambda^*-\lambda^k\right\rangle\leq \left\|\lambda^k-\lambda^*\right\|\left\|\nabla d(\lambda^k)\right\|\leq B\left\|\nabla d(\lambda^k)\right\|.$$
Combining the above, {\eqref{theta}, and \eqref{key_descent}}, we immediately obtain \begin{equation}\label{delta}\delta_{k+1}\leq \delta_k-\theta\delta_k^2+\frac{3}{2}\eta_k,~k=1,2,\ldots,\end{equation} which further implies 
\begin{equation}\label{deltak}\delta_k\leq \delta_1 - \theta\sum_{i=1}^{k-1} \delta_i^2+\frac{3}{2}\sum_{i=1}^{k-1}\eta_i,~k=1,2,\ldots\end{equation}
From the definition of $\delta_k$, we know $\delta_k\geq 0$ for all $k\geq 1.$ From this, \eqref{summable}, and \eqref{deltak}, we obtain \eqref{deltasummable}.

Next, we prove \eqref{axksummable}. It follows from \eqref{bardgradient} and \eqref{dual_descent} that
  \begin{equation}\label{axkineq}\left\|Ax^{k+1}-b\right\|^2\leq  \frac{2}{{\beta}}\left(d(\lambda^{k+1})- d(\lambda^k)+\eta_k\right),~k=1,2,\dots.\end{equation}
Summing \eqref{axkineq} from $i=1$ to $k$ yields
  \begin{align*}
    \sum_{i=1}^k\left\|Ax^{i+1}-b\right\|^2 &\leq  \frac{2}{{\beta}}\left(d(\lambda^{k+1})- d(\lambda^1)+\sum_{i=1}^k\eta_i\right)\\
                                            &\leq  \frac{2}{{\beta}}\left(d(\lambda^{*})- d(\lambda^1)+\sum_{i=1}^k\eta_i\right)\\
                                            &\leq  \frac{2}{{\beta}}\left(\frac{1}{2\beta}\left\|\lambda^1-\lambda^*\right\|^2+\sum_{i=1}^k\eta_i\right)\\
                                            &\leq  \frac{B^2}{{{\beta^2}}},
  \end{align*}where {the third inequality is due to the facts that $\nabla d(\lambda^*)=0$ and $\nabla d(\lambda)$ is $\frac{1}{\beta}$-Lipschitz continuous \cite{ber99programming}} and
  the last inequality is {due to \eqref{B}}. The proof of Theorem \ref{thm-convergence} is completed.\hfill$\Box$
\end{proof}

Theorem \ref{thm-convergence} shows the global convergence of Algorithm \ref{alg:cgal} under conditions \eqref{violationcgal} and \eqref{summable}. {Classical} conditions in \cite{rockafellar} that guarantee the global convergence of the IAL framework are \eqref{neseta} and \eqref{summable}. Since \eqref{neseta} implies \eqref{violationcgal} ({by Theorem \ref{ThmNesInner}}), our conditions \eqref{violationcgal} and \eqref{summable} are weaker than conditions \eqref{neseta} and \eqref{summable} in \cite{rockafellar}. 

\section{Non-ergodic convergence rate}\label{subsec:rate}
In this {section}, we present the non-ergodic convergence rate result of our IAL framework (Algorithm \ref{alg:cgal}). Theorems \ref{thm-rate} and \ref{thm-rate-global} show the non-ergodic convergence rate of the IAL framework.

Since $\delta_k\rightarrow 0$ (cf. Theorem \ref{thm-convergence}) and $\eta_k\rightarrow 0$, there exists $k_0\geq 4$ such that
\begin{equation}\label{k0}
\max_{k\geq k_0}\left\{\delta_{k}\right\} \leq \frac{1}{2\theta}~\text{and}~\max_{k\geq k_0}\left\{\eta_{k}\right\} \leq \frac{1}{24\theta},
\end{equation}{where $\theta$ is given in \eqref{theta}.}
{Define}
\begin{equation}\label{tau}\tau_1:=\frac{k_0}{4\theta}~\text{and}~\tau_2:=\frac{1}{4\theta\sqrt{\eta_{k_0}}}.\end{equation}
It is easy to verify
\begin{equation}\label{tau12great}\tau_1\geq \frac{1}{\theta}~\text{and}~\tau_2\geq\sqrt{\frac{3}{2\theta}}.\end{equation}


\begin{theorem}\label{thm-rate}
Let $\left\{\lambda^k\right\}$ be generated by Algorithm \ref{alg:cgal}. Suppose that the positive sequence $\left\{\eta_k\right\}$ is nonincreasing and satisfies \eqref{summable} and
\begin{equation}\label{etak}
    \sqrt{\frac{\eta_{k+1}}{\eta_k}}\geq \displaystyle \frac{k-2}{k},~k=k_0,\,k_0+1,\ldots,
\end{equation}
where $k_0$ satisfies \eqref{k0}. Then,
\begin{equation}\label{deltaeta}
    \delta_k\leq \frac{\tau_1}{k}+\tau_2\sqrt{\eta_{k}},~k=k_0,\,k_0+1,\ldots,
\end{equation}
where $\tau_1$ and $\tau_2$ are defined in \eqref{tau}.
\end{theorem}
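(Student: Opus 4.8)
The plan is to reduce the whole statement to a single scalar recursion for the dual suboptimality $\delta_k$ and then prove \eqref{deltaeta} by induction on $k$. Three ingredients feed into that recursion. First, the inexactness measured by \eqref{violationcgal} must be converted into a bound on the gradient error $ (Ax^{k+1}-b)-\nabla d(\lambda^k)$, where $\nabla d(\lambda^k)=Ax(\lambda^k)-b$ is the true dual gradient at $\lambda^k$. Taking $x=x(\lambda^k)$ in \eqref{violationcgal}, using convexity of $\hat f_\beta(\cdot;\lambda^k)$ together with the exact quadratic term $\tfrac{\beta}{2}\|A(\cdot)-b\|^2$ (which supplies strong convexity along the range of $A$) and the optimality of $x(\lambda^k)$ for \eqref{subproblem}, I would obtain $\|(Ax^{k+1}-b)-\nabla d(\lambda^k)\|^2\le \tfrac{2\eta_k}{\beta}$; equivalently, writing $e^k:=\nabla d(\lambda^k)-(Ax^{k+1}-b)$, one has $\tfrac{\beta}{2}\|e^k\|^2\le\eta_k$. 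This is the content of the error lemma alluded to in the text.

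Second, I would combine this with the standard fact that $d$ is concave with $\tfrac1\beta$-Lipschitz gradient. Applying the descent inequality to the update \eqref{lambda}, $\lambda^{k+1}=\lambda^k+\beta(Ax^{k+1}-b)$, and using the algebraic identity $\beta\langle\nabla d(\lambda^k),Ax^{k+1}-b\rangle-\tfrac{\beta}{2}\|Ax^{k+1}-b\|^2=\tfrac{\beta}{2}\|\nabla d(\lambda^k)\|^2-\tfrac{\beta}{2}\|e^k\|^2$, I obtain the clean estimate
\[\delta_{k+1}\le \delta_k-\tfrac{\beta}{2}\|\nabla d(\lambda^k)\|^2+\eta_k.\]
Third, I need the dual iterates to stay bounded, $\|\lambda^k-\lambda^*\|\le B$ with $B$ as in \eqref{theta}; this follows by telescoping a per-iteration inequality of the form $\|\lambda^{k+1}-\lambda^*\|^2\le\|\lambda^k-\lambda^*\|^2+2\beta\eta_k$ minus a nonnegative term (the same estimate underlying Theorem \ref{thm-convergence}) and invoking \eqref{summable}. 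Given boundedness, concavity yields $\delta_k\le\langle\nabla d(\lambda^k),\lambda^*-\lambda^k\rangle\le B\|\nabla d(\lambda^k)\|$, so $\tfrac{\beta}{2}\|\nabla d(\lambda^k)\|^2\ge\tfrac{\beta}{2B^2}\delta_k^2=2\theta\delta_k^2\ge\theta\delta_k^2$. Substituting into the descent estimate gives the master recursion
\[\delta_{k+1}\le \delta_k-\theta\delta_k^2+\eta_k,\]
where I deliberately retain only the factor $\theta$ so that $t\mapsto t-\theta t^2$ is nondecreasing on $[0,\tfrac1{2\theta}]$, the interval singled out by \eqref{k0}.

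The last step is the induction establishing $\delta_k\le u_k:=\tfrac{\tau_1}{k}+\tau_2\sqrt{\eta_k}$ for $k\ge k_0$. For the base case, \eqref{k0} gives $\delta_{k_0}\le\tfrac1{2\theta}$, while \eqref{tau} gives $u_{k_0}=\tfrac{\tau_1}{k_0}+\tau_2\sqrt{\eta_{k_0}}=\tfrac1{4\theta}+\tfrac1{4\theta}=\tfrac1{2\theta}$, so $\delta_{k_0}\le u_{k_0}$. For the inductive step, monotonicity of $\{\eta_k\}$ shows $u_k\le\tfrac1{2\theta}$, so both $\delta_k$ and $u_k$ lie in the region where $t\mapsto t-\theta t^2$ is nondecreasing; hence the master recursion yields $\delta_{k+1}\le u_k-\theta u_k^2+\eta_k$, and it remains to verify $\theta u_k^2\ge(u_k-u_{k+1})+\eta_k$. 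Expanding $\theta u_k^2$ and bounding $u_k-u_{k+1}=\tfrac{\tau_1}{k(k+1)}+\tau_2(\sqrt{\eta_k}-\sqrt{\eta_{k+1}})$ — where condition \eqref{etak} is used precisely to guarantee $\sqrt{\eta_k}-\sqrt{\eta_{k+1}}\le\tfrac2k\sqrt{\eta_k}$ — the inequality reduces term by term to the two requirements $\theta\tau_1\ge1$ and $\theta\tau_2^2\ge1$, both of which are furnished by \eqref{tau12great}.

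The main obstacle is this final induction rather than the recursion itself: one must guess the correct two-term ansatz $\tfrac{\tau_1}{k}+\tau_2\sqrt{\eta_k}$ (the $\sqrt{\eta_k}$ reflecting that the gradient error scales like $\sqrt{\eta_k}$) and then arrange the bookkeeping so that the cross term in $\theta u_k^2$ absorbs exactly the $\tfrac{2}{k}\tau_2\sqrt{\eta_k}$ produced by the decay of $\sqrt{\eta_k}$. This is where \eqref{etak} is indispensable: without a lower bound on how fast $\sqrt{\eta_{k+1}/\eta_k}$ may drop, the error term need not propagate through the induction. A secondary subtlety is keeping every iterate from $k_0$ onward inside $[0,\tfrac1{2\theta}]$, where the scalar map is monotone; this is exactly what the threshold condition \eqref{k0} and the calibrated constants in \eqref{tau} provide, while \eqref{tau12great} supplies the slack that makes the inductive inequalities close.
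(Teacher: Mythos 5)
Your proposal is correct and follows essentially the same route as the paper: the same gradient-error bound derived from \eqref{violationcgal} and the optimality of $x(\lambda^k)$, the same boundedness $\left\|\lambda^k-\lambda^*\right\|\le B$, the same master recursion $\delta_{k+1}\le\delta_k-\theta\delta_k^2+c\,\eta_k$, and the identical induction on the ansatz $\frac{\tau_1}{k}+\tau_2\sqrt{\eta_k}$ with the base case and term-by-term bookkeeping fixed by \eqref{k0}, \eqref{tau}, \eqref{etak}, and \eqref{tau12great}. The only (harmless) deviation is that you obtain the per-iteration descent from the $\frac{1}{\beta}$-Lipschitz continuity of $\nabla d$ rather than from the augmented Lagrangian itself as in Lemma \ref{lemma_inequality} combined with Lemma \ref{lemma-ggap}, which yields the slightly tighter constant $c=1$ in place of the paper's $c=\frac{3}{2}$; both versions close the induction since \eqref{tau12great} guarantees $\theta\tau_2^2\ge\frac{3}{2}$.
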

\begin{proof}
We prove the theorem by induction. From \eqref{k0} and \eqref{tau}, we know $$\delta_{k_0}\leq \frac{1}{2\theta}=\frac{\tau_1}{k_0}+\tau_2\sqrt{\eta_{k_0}}.$$ Therefore, the inequality \eqref{deltaeta} holds for $k=k_0$. Next, we assume that \eqref{deltaeta} holds for some $k\geq k_0$, and we consider the
case $k+1$. We have
\begin{align*}
  \delta_{k+1} \leq &\  \delta_k-\theta\delta_k^2+\frac{3}{2}\eta_k\\
               \leq &\  \frac{\tau_1}{k}+\tau_2\sqrt{\eta_k}-\theta\left(\frac{\tau_1}{k}+\tau_2\sqrt{\eta_k}\right)^2+\frac{3}{2}\eta_k\\
               =    &\  \frac{\tau_1}{k+1}\frac{\left(k+1\right)\left(k-\theta\tau_1\right)}{k^2} +\tau_2\sqrt{\eta_{k+1}}\frac{\sqrt{\eta_k}\left(1-\frac{2\theta\tau_1}{k}\right)}{\sqrt{\eta_{k+1}}}+\left(\frac{3}{2}-\theta\tau_2^2\right)\eta_k\nonumber\\
               \leq &\  \frac{\tau_1}{k+1}+\tau_2\sqrt{\eta_{k+1}},
\end{align*}
where the first inequality is due to \eqref{delta}, {the second inequality is due to {the facts that the function $x-\theta x^2$ is an increasing function in $x\in(-\infty, \frac{1}{2\theta}]$ and $\frac{\tau_1}{k}+\tau_2\sqrt{\eta_k}\leq \frac{\tau_1}{k_0}+\tau_2\sqrt{\eta_{k_0}}=\frac{1}{2\theta}$~for all $k\geq k_0$ (because $\left\{\eta_k\right\}$ is nonincreasing),}} and the last inequality is due to \eqref{tau12great} and \eqref{etak}. The proof of Theorem \ref{thm-rate} is completed. \hfill$\Box$
\end{proof}

As shown in \eqref{deltaeta}, the rate that $\left\{\delta_k\right\}$ converges to zero depends on two terms, i.e.,  $\frac{\tau_1}{k}~\text{and}~\tau_2\sqrt{\eta_{k}},$ and the rate is determined by the slower one of them: if $k\sqrt{\eta_{k}}\rightarrow 0$, then $\delta_k\rightarrow 0$ with a rate of ${\cal O}\left(1/k\right);$ otherwise, $\delta_k\rightarrow 0$ with a rate of ${\cal O}\left(\sqrt{\eta_{k}}\right).$
In particular, if $\eta_k=0$ for all $k\geq 1,$ then Algorithm \ref{alg:cgal} reduces to the exact dual gradient ascent method, and achieves the ${\cal O}\left(1/k\right)$ convergence rate.

These facts indicate that the sequence $\left\{\eta_k\right\}$ in Algorithm \ref{alg:cgal} should not be chosen such that $\left\{\sqrt{\eta_k}\right\}$ converges faster than $\left\{1/k\right\}$ to zero. This is {because} such a choice would increase the computational cost of solving the AL subproblem, but theoretically cannot improve the convergence rate of $\left\{\delta_k\right\},$ which is ${\cal O}\left(1/k\right)$ in this case. One possible choice of the sequence $\left\{\eta_k\right\}$ is
\begin{equation}\label{etakalpha}\eta_k=\frac{\sigma}{k^{2\alpha}},~k=1,2,\ldots\end{equation}
with some constant $\sigma>0$ and $\alpha\in(\frac{1}{2},1].$ It is easy to check that \eqref{etakalpha} satisfies all conditions required in Theorem \ref{thm-rate}.

The following Theorem \ref{thm-rate-global} gives the non-ergodic convergence rate of Algorithm \ref{alg:cgal} when ${\eta_k}$ is chosen as in \eqref{etakalpha}. We first present a lemma, which is useful in proving Theorem \ref{thm-rate-global}.

\begin{lemma}\label{prop:1}
Suppose the nonnegative sequence $\left\{\delta_k\right\}$ satisfies
\begin{eqnarray}\label{eq:ineqnonoise2}
\frac{E}{2} \delta_{k+1}^2 + \delta_{k+1} \leq \delta_k,~k=1,2,\ldots,
\end{eqnarray}where $E>0$ is a constant.
Then, we have
\begin{eqnarray}\label{eq:complexity1}
\delta_{k} \leq \frac{\max\left\{\delta_1,\frac{4}{E}\right\}}{k},~k=1,2,\ldots.
\end{eqnarray}
\end{lemma}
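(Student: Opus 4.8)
The plan is to establish $\delta_k \leq M/k$ for all $k$, where $M := \max\left\{\delta_1,\frac{4}{E}\right\}$, by induction on $k$. The base case $k=1$ is immediate, since $\delta_1 \leq M$ by the definition of $M$. The structural observation that drives the induction is that the hypothesis \eqref{eq:ineqnonoise2} should be read as $h(\delta_{k+1}) \leq \delta_k$, where $h(t) := \frac{E}{2}t^2 + t$ is strictly increasing on $[0,+\infty)$ (because $h'(t) = Et+1 > 0$ there). Consequently, to pass from $\delta_k \leq M/k$ to $\delta_{k+1} \leq M/(k+1)$, it suffices to show that $\delta_k \leq h\!\left(\frac{M}{k+1}\right)$: this gives $h(\delta_{k+1}) \leq h\!\left(\frac{M}{k+1}\right)$, and the monotonicity of $h$ together with $\delta_{k+1} \geq 0$ then forces $\delta_{k+1} \leq \frac{M}{k+1}$.

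Next I would reduce the inductive step to a purely algebraic inequality. Using the inductive hypothesis $\delta_k \leq M/k$, it is enough to verify $\frac{M}{k} \leq h\!\left(\frac{M}{k+1}\right) = \frac{E}{2}\frac{M^2}{(k+1)^2} + \frac{M}{k+1}$. Dividing by $M>0$ and rearranging, this is equivalent to $\frac{1}{k(k+1)} = \frac{1}{k} - \frac{1}{k+1} \leq \frac{EM}{2(k+1)^2}$, i.e.\ to $1 + \frac{1}{k} \leq \frac{EM}{2}$. Since $k \geq 1$ yields $1 + \frac{1}{k} \leq 2$, and since $M \geq \frac{4}{E}$ yields $\frac{EM}{2} \geq 2$, the inequality holds, and the induction closes. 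Note that dividing by $M$ (rather than by $\delta_k$ or $\delta_{k+1}$) keeps the argument valid even when some iterate vanishes, as the monotonicity step handles $\delta_k = 0$ automatically by forcing $\delta_{k+1} = 0$.

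The main obstacle, I expect, is conceptual rather than computational: one must recognize that \eqref{eq:ineqnonoise2} bounds $\delta_{k+1}$ only \emph{implicitly}, through the monotone map $h$, rather than supplying an explicit recursion, and that the constant $\frac{4}{E}$ in $M$ has been calibrated precisely so that the resulting per-step inequality $1 + \frac{1}{k} \leq \frac{EM}{2}$ closes for every $k \geq 1$. A tempting alternative is to pass to reciprocals and bound $\frac{1}{\delta_{k+1}} - \frac{1}{\delta_k}$ from below, but this quantity does not telescope cleanly here (the lower bound one obtains carries a factor $\delta_{k+1}/\delta_k$), which is why the direct monotonicity-based induction with the explicit constant is the cleaner route.
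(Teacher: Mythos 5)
Your proof is correct and is essentially the paper's own argument: both are inductions on $k$ in which the step closes on the same inequality $\frac{EM}{2}\geq 1+\frac{1}{k}$, guaranteed by $M\geq\frac{4}{E}$. The only difference is presentational — the paper explicitly inverts the quadratic to get $\delta_{k+1}\leq\frac{-1+\sqrt{1+2E\delta_k}}{E}$ and then rationalizes, whereas you apply the increasing map $h(t)=\frac{E}{2}t^2+t$ directly and avoid the square roots.
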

\begin{proof}
{Again we prove this by induction.} Clearly, the inequality \eqref{eq:complexity1} is true for $k=1$. Next, assuming \eqref{eq:complexity1} is true for some $k\geq 1$, we show it is also true for $k+1$. In fact, we have
\begin{align*}
\delta_{k+1} \leq \frac{-1+\sqrt{1+2E\delta_k}}{E} \leq &\  \frac{-1+\sqrt{1+2E\frac{\max\left\{\delta_1,\frac{4}{E}\right\}}{k}}}{E}\\=&\ \frac{2\max\left\{\delta_1,\frac{4}{E}\right\}}{k+\sqrt{k^2+2E\max\left\{\delta_1,\frac{4}{E}\right\}k}}\\
\leq&\  \frac{\max\left\{\delta_1,\frac{4}{E}\right\}}{k+1},
\end{align*}
where the first inequality is due to the inequality \eqref{eq:ineqnonoise2}, and the second inequality is due to the assumption that \eqref{eq:complexity1} holds for $k.$\hfill$\Box$
\end{proof}

\begin{theorem}\label{thm-rate-global}
Let $\left\{x^k\right\}$ and $\left\{\lambda^k\right\}$ be generated by Algorithm \ref{alg:cgal}. Suppose that the positive sequence $\left\{\eta_k\right\}$ is chosen as in \eqref{etakalpha}. Then,
 \begin{equation}\label{deltaeta-global}
  \delta_k\leq \frac{C}{k^{\alpha}},~k=1,2,\ldots,
\end{equation}
where \begin{equation}\label{C}C=4\sqrt{\frac{3\left(\frac{3}{2}\theta \sigma+1\right)\sigma}{\theta}}+\frac{4}{3}\max\left\{\delta_1,\frac{4}{\theta}\right\},\end{equation} and $\theta$ is given in \eqref{theta}; \begin{equation}\label{axketa}
    \left\|Ax^{k+1}-b\right\|^2\leq \psi_k:=\frac{2}{\beta}\left(\frac{{C+{\sigma}}}{k^{\alpha}}\right),~k=1,2,\ldots;
  \end{equation}
 \begin{align}\label{primalopt}
 -\left\|\lambda^*\right\|\sqrt{\psi_k}-\frac{\beta}{2}\psi_k\leq F(x^{k+1})-F(x^*)\leq\left(\left\|\lambda^*\right\|+B\right)\sqrt{\psi_k}+\eta_k,~k=1,2,\ldots,
  \end{align}
  where $\lambda^*$ is an optimal solution to problem \eqref{dual} and $B$ is given in \eqref{theta};
  {and
  \begin{equation}\label{Lagrange}\max_{x\in\mathbb{R}^{n}} \left\{\left\langle{{\nabla f(x^{k+1})}+A^T\lambda^{k+1}},\,x^{k+1}-x\right\rangle + g(x^{k+1})-g(x)\right\}\leq \eta_k,~k=1,2,\ldots\end{equation}}

\end{theorem}

\begin{proof}
  We show \eqref{deltaeta-global}, \eqref{axketa}, \eqref{primalopt}, and \eqref{Lagrange} separately.

We first show \eqref{deltaeta-global} by induction. Clearly, the inequality \eqref{deltaeta-global} holds for $k=1$.
Next, we assume that \eqref{deltaeta-global} holds for some $k\geq 1$, and show it is also true for $k+1$. We use the contrapositive argument. Assume that \eqref{deltaeta-global} does not hold for $k+1$, i.e.,
\begin{eqnarray}\label{eq:nothold}
{\delta_{k+1} > \frac{C}{(k+1)^{\alpha}}},
\end{eqnarray}where $C$ is given in \eqref{C}. Let $z^*=\frac{C}{4}-\frac{1}{\theta}>0.$ If \begin{eqnarray} \label{eq:ineqnoise4}
\frac{\theta}{2} \left(\delta_{k+1}-\frac{z^*}{(k+1)^{\alpha}}\right)^2 + \left(\delta_{k+1}-\frac{z^*}{(k+1)^{\alpha}}\right) \leq
\left(\delta_{k}-\frac{z^*}{k^{\alpha}}\right)
\end{eqnarray}holds, then it follows from Lemma \ref{prop:1} that
$$\delta_{k+1}-\frac{z^*}{(k+1)^{\alpha}} \leq \frac{\max\left\{\delta_1,\frac{4}{\theta}\right\}}{k+1},$$
and
{\begin{eqnarray}\label{eq:final}
\delta_{k+1}\leq \frac{z^* + \max\left\{\delta_1,\frac{4}{\theta}\right\}}{(k+1)^{\alpha}} < \frac{C}{(k+1)^{\alpha}}.
\end{eqnarray}}Clearly, \eqref{eq:final} contradicts \eqref{eq:nothold}, which implies that \eqref{deltaeta-global} is true. Next, we prove \eqref{eq:ineqnoise4}, which is equivalent to $$P(z^*):=\frac{\theta}{2} \left(\delta_{k+1}-\frac{z^*}{(k+1)^{\alpha}}\right)^2 + \left(\delta_{k+1}-\frac{z^*}{(k+1)^{\alpha}}\right)- \left(\delta_{k}-\frac{z^*}{k^{\alpha}}\right)\leq0.$$

We consider the following quadratic function $Q(z)$ with respect to $z:$
\begin{eqnarray*}\label{eq:quad}
Q(z) = \frac{\theta}{2}z^2 - \left(\frac{\theta C}{4}-1\right)z + \frac{3}{2}\left(\frac{3\theta \sigma}{2}+1\right)\sigma.
\end{eqnarray*}
It can be verified that the minimizer of $Q(z)$ is $z^*.$ Since the discriminant of $Q(z)$ is nonnegative, it follows that the minimum value
\begin{equation}\label{Qz*}
    Q(z^*)\leq 0.
\end{equation}
Moreover, for any $k\geq 1,$ we have $$\eqref{delta}\Longrightarrow \delta_{k+1}\leq \delta_k + \frac{3}{2}\eta_k\Longrightarrow\frac{1}{2}\delta_{k+1}^2 \leq \delta_k^2 + \frac{9}{4}\eta_k^2\Longrightarrow-\delta_k^2 \leq -\frac{1}{2}\delta_{k+1}^2 + \frac{9}{4}\sigma\eta_k.$$ Combining the last inequality in the above with \eqref{delta} yields
\begin{eqnarray}\label{eq:ineqnoise2}
{\frac{\theta}{2}\delta_{k+1}^2 + \delta_{k+1} \leq \delta_k  + \frac{3}{2}\left(\frac{3\theta\sigma}{2} + 1\right) \eta_k},
\end{eqnarray}
which further implies 
\begin{eqnarray}P(z^*)&=&\frac{\theta}{2} \delta_{k+1}^2 + \delta_{k+1} - \delta_k - \frac{\theta\delta_{k+1}z^*}{(k+1)^{\alpha}} + \frac{\theta(z^*)^2}{2(k+1)^{2\alpha}}
-\frac{z^*}{(k+1)^{\alpha}} + \frac{z^*}{k^{\alpha}}\nonumber\\
& \leq & \frac{3}{2}\left(\frac{3\theta\sigma}{2} + 1\right) \eta_k- \frac{\theta\delta_{k+1}z^*}{(k+1)^{\alpha}} + \frac{\theta(z^*)^2}{2(k+1)^{2\alpha}}
-\frac{z^*}{(k+1)^{\alpha}} + \frac{z^*}{k^{\alpha}}.\label{Q'z}
\end{eqnarray}
By ${\eta_k\leq \frac{\sigma}{k^{2\alpha}}},$ the assumption $\delta_{k+1} > \frac{C}{(k+1)^{\alpha}}$ (cf. \eqref{eq:nothold}), the facts $(k+1)^{2\alpha}\leq 4k^{2\alpha}$ and {$(k+1)^{\alpha}-k^{\alpha}\leq 1$ for all $k\geq 1$ and $\alpha\in(0,1]$}, we get
\begin{eqnarray*}
&& \frac{3}{2}\left(\frac{3\theta\sigma}{2} + 1\right) \eta_k- \frac{\theta\delta_{k+1}z^*}{(k+1)^{\alpha}} + \frac{\theta(z^*)^2}{2(k+1)^{2\alpha}}
-\frac{z^*}{(k+1)^{\alpha}} + \frac{z^*}{k^{\alpha}}\nonumber\\
&\leq & \frac{\frac{3}{2}\left(\frac{3\theta\sigma}{2} + 1\right) \sigma}{k^{2\alpha}} - \frac{\theta C}{4k^{2\alpha}}z^*
+ \frac{\theta}{2k^{2\alpha}}(z^*)^2 + \frac{1}{k^{2\alpha}}z^*\nonumber\\
&=&\frac{Q(z^*)}{k^{2\alpha}},
\end{eqnarray*}
which, together with \eqref{Qz*} and \eqref{Q'z}, yields $P(z^*)\leq 0.$

We now show \eqref{axketa}. From \eqref{axkineq}, we obtain
   \begin{align*}
   \left\|Ax^{k+1}-b\right\|^2 \leq  \frac{2}{{\beta}}\left(d(\lambda^{k+1})- d(\lambda^k)+\eta_k\right) \leq & \ \frac{2}{{\beta}}\left(d(\lambda^*)-d(\lambda^k)+\eta_k\right) \\
   = & \ \frac{2}{{\beta}}\left(\delta_k+\eta_k\right),
   \end{align*}
  which, together with \eqref{etakalpha} and \eqref{deltaeta-global}, yields \eqref{axketa}.

Next, we show \eqref{primalopt}. From the strong duality and the definition of $\LCal_{\beta}(x;\lambda)$ (cf. \eqref{augLag}), we obtain
  \begin{align*}
    F(x^*)\leq \LCal_{\beta}(x^{k+1};\lambda^*) & = F(x^{k+1}) + \left\langle \lambda^*, Ax^{k+1}-b \right\rangle + \frac{\beta}{2} \left\|Ax^{k+1}-b\right\|^2\\
    & \leq F(x^{k+1}) + \left\|\lambda^*\right\|\left\|Ax^{k+1}-b\right\| + \frac{\beta}{2} \left\|Ax^{k+1}-b\right\|^2.
  \end{align*}This, together with \eqref{axketa}, implies
  \begin{equation}\label{primalopt1}
    F(x^{k+1}) - F(x^*) \geq - \|\lambda^*\|\sqrt{\psi_k}  - \frac{\beta}{2}\psi_k.
  \end{equation} On the other hand, we have
  \begin{align}
    \LCal_{\beta}(x^{k+1};\lambda^k) &\leq \hat f_{\beta}(x(\lambda^k);\lambda_k)+\left\langle\nabla \hat f_{\beta}(x^{k+1};\lambda^k), x^{k+1}-x(\lambda^k)\right\rangle + g(x^{k+1})\nonumber\\
                                 & = d(\lambda^k)+\left\langle\nabla \hat f_{\beta}(x^{k+1};\lambda^k), x^{k+1}-x(\lambda^k)\right\rangle+g(x^{k+1})-g(x(\lambda^k))\nonumber\\
                                 &\leq d(\lambda^k) + \eta_k\nonumber \\
                                 &\leq F(x^*)+\eta_k ,\nonumber
  \end{align}
  where the first inequality is due to the convexity of $\hat f_{\beta}(x;\lambda)$ with respect to $x,$ the first equality is due to the definition of $d(\lambda^k),$ the second inequality is due to \eqref{violationcgal}, and the last inequality is due to
  the fact $d(\lambda^k)\leq F(x^*).$
  Recall the definition of $\LCal_{\beta}(x;\lambda),$ we get
  $$F(x^{k+1}) + \left\langle \lambda^k, Ax^{k+1}-b \right\rangle + \frac{\beta}{2} \left\|Ax^{k+1}-b\right\|^2\leq F(x^*)+\eta_k,$$ which, together with   \eqref{boundlambda}, immediately implies
  \begin{equation}\label{primalopt2}
    F(x^{k+1})- F(x^*) \leq \left(\|\lambda^*\|+B\right)\sqrt{\psi_k}+\eta_k.
  \end{equation}
  Combining \eqref{primalopt1} and \eqref{primalopt2} yields \eqref{primalopt}.

  {Finally, we show \eqref{Lagrange}. It follows from \eqref{lambda} and the definition of $\hat f_{\beta}(x; \lambda)$ in \eqref{hatf} that
  $${\nabla f(x^{k+1})}+A^T\lambda^{k+1}={\nabla f(x^{k+1})}+A^T\left(\lambda^{k}+\beta\left(Ax^{k+1}-b\right)\right)={\nabla \hat f_{\beta}(x^{k+1};\lambda^k)}.$$
  The above, together with \eqref{violationcgal}, immediately implies \eqref{Lagrange}.} The proof of Theorem \ref{thm-rate-global} is completed.\hfill$\Box$
\end{proof}


{
As a direct consequence of Theorem \ref{thm-rate-global}, we obtain the following result.
\begin{corollary}\label{corollary:rate}
  Let $\left\{x^k\right\}$ and $\left\{\lambda^k\right\}$ be generated by Algorithm \ref{alg:cgal} with $\eta_k=\frac{\sigma}{k^2}$. Then,
  \begin{equation}\label{rateO}\delta_k={\cal O}\left(\frac{1}{k}\right),~\left\|Ax^{k+1}-b\right\|={\cal O}\left(\frac{1}{\sqrt{k}}\right),~\left|F(x^{k+1})-F(x^*)\right|={\cal O}\left(\frac{1}{\sqrt{k}}\right)\end{equation}
  {and
  \begin{equation*}\max_{x\in\mathbb{R}^{n}} \left\{\left\langle{{\nabla f(x^{k+1})}+A^T\lambda^{k+1}},\,x^{k+1}-x\right\rangle + g(x^{k+1})-g(x)\right\}={\cal O}\left(\frac{1}{k^2}\right).\end{equation*}}
\end{corollary}
}

{Next, we present some iteration complexity results of Algorithm 1 to return an $\epsilon$-optimal solution of problem \eqref{problemcouple}. Our definition of the $\epsilon$-optimal solution is given as follows, which is a perturbation of the KKT optimality conditions. 
\begin{definition}[$\epsilon$-optimal solution]\label{esolution}
  For any given $\epsilon>0,$ $(x_{\epsilon}, \lambda_{\epsilon})$ is called an $\epsilon$-optimal solution pair if they satisfy
  \begin{equation}\label{oureoptimal1}
        \left\|Ax_{\epsilon}-b\right\|\leq \sqrt{\epsilon}
      \end{equation} and
        \begin{equation}\label{oureoptimal2}
        \max_{x\in\mathbb{R}^{n}} \left\{\left\langle{{\nabla f(x_{\epsilon})}+A^T\lambda_{\epsilon}},\,x_{\epsilon}-x\right\rangle + g(x_{\epsilon})-g(x)\right\}\leq \epsilon.
      \end{equation}
\end{definition}

\begin{theorem}[Iteration Complexity]\label{thmcomplexity}
For any $\epsilon>0,$ set \begin{equation}\label{alphasigma}\alpha=\sigma=1\end{equation} in \eqref{etakalpha} and the penalty parameter
\begin{equation}\label{betaopt}\beta=\frac{2(C+1)}{\sqrt{\epsilon}},\end{equation}
where $C$ is defined in \eqref{C}. Then, the total number of iterations for Algorithm 1, where AL subproblem \eqref{subproblemcgal} is approximately solved by {Algorithms 2 or 3} until a point $x^{k+1}$ satisfying \eqref{violationcgal} is found, to return an $\epsilon$-optimal solution of problem \eqref{problemcouple} satisfying \eqref{oureoptimal1} and \eqref{oureoptimal2} is at most
\begin{equation}\label{totaliteration1}
  T_1:={\left\lceil4\left(L_{f}+\frac{2\left(C+1\right)\|A\|^2}{\sqrt{\epsilon}}\right)\frac{D^2}{\epsilon^{3/2}}\right\rceil}
\end{equation} and
\begin{equation}\label{totaliteration2}
  T_2:=\left\lceil6\left(L_{f}+\frac{2\left(C+1\right)\|A\|^2}{\sqrt{\epsilon}}\right)\frac{D^2}{\epsilon^{3/2}}\right\rceil,
\end{equation} respectively, where $L_f$ is the Lipschitz constant of $\nabla f(x)$ and $D$ is defined in \eqref{Dx}.
\end{theorem}
\begin{proof} Let $K=\left\lceil 1/\sqrt{\epsilon}\right\rceil.$ Substituting $k=K,$ $\alpha~\text{and}~\sigma$ in \eqref{alphasigma}, and $\beta$ in \eqref{betaopt} into \eqref{axketa} and \eqref{Lagrange}, we immediately see that the pair $(x^{k+1}, \lambda^{k+1})$ satisfies \eqref{oureoptimal1} and \eqref{oureoptimal2}. Next, {we compute the total iteration complexity of Algorithm 1 with Algorithm 2 being used to solve the AL subproblem.} By invoking Theorem \ref{ThmNesInner}, we know that the total number of iterations is
$$\sum_{k=1}^{K}\left\{\left\lceil\frac{{4}L_{\hat f}D^2}{\eta_k}\right\rceil-1\right\}\leq \frac{{4}\left(L_{f}+\beta \|A\|^2\right)D^2}{\sigma}\sum_{k=1}^{K}k^{2\alpha}\leq T_1.$$ Using the same argument, we can show that the iteration complexity of {Algorithm 1} {with Algorithm 3} being used to solve the AL subproblem is upper bounded by $T_2.$ {We omit the details for succinctness.}  
\hfill$\Box$
\end{proof}

}

{\section{Comparisons with existing works \cite{lan2015iteration,necoara152}}
In this section, we make some remarks on the comparison of our proposed IAL framework (Algorithm \ref{alg:cgal}) and two closely related methods in \cite{lan2015iteration,necoara152} for solving the linearly constrained convex programming problems.

We first compare our proposed IAL method with the one in \cite{lan2015iteration}. The method in \cite{lan2015iteration} is designed for solving problem \eqref{problemcouple} with $g(x)=\text{Ind}_{\X}(x).$ It applies Nesterov's optimal first-order method to solve AL subproblem \eqref{subproblemcgal} until a point $x^{k+1}$ satisfying \eqref{neseta} is found. Our IAL framework can be used to solve more general problem \eqref{problemcouple} (with a general composite function $g(x)$). Our framework requires approximately solving subproblem \eqref{subproblemcgal} until a point $x^{k+1}$ satisfying \eqref{violationcgal} (which is easier to check than \eqref{neseta}) is found.

The work \cite{lan2015iteration} shows the same non-ergodic convergence rate results as ours {in \eqref{rateO}} in Corollary \ref{corollary:rate}, but under a much stronger condition that the sequence $\left\{\eta_k\right\}$ in \eqref{neseta} satisfies \begin{equation*}\label{lanassumption}\sum_{i=1}^k\eta_i^2={\cal O}\left(\frac{1}{k}\right).\end{equation*} To make it more clearly, consider the special case where we are interested in finding an exact solution of problem \eqref{problemcouple}, which requires $k\rightarrow +\infty$ in Corollary \ref{corollary:rate}. In this case, the method in \cite{lan2015iteration} needs to solve each AL subproblem exactly (i.e., $\eta_i$ in \eqref{neseta} needs to be zero for all $i=1,2,\ldots$), while our IAL framework only needs to solve each subproblem approximately (i.e., $\eta_i$ in \eqref{violationcgal} only needs to be in the order of ${\cal O}(1/i^2)$ for $i=1,2,\ldots$).

{Next, we compare our proposed IAL method with the one in \cite{necoara152}. The closest related method in \cite{necoara152} to our IAL method, called inexact gradient augmented Lagrangian, is designed for solving a class of convex conic problems{
\begin{equation}\label{conicproblem}
\begin{array}{cl}
\displaystyle \min_{u\in\cal{U}}~\displaystyle f(u),~\mbox{s.t.}\quad Gu+g\in\cal{K},
\end{array}
\end{equation} where ${\mathcal{U}}\subseteq \mathbb{R}^n$ is a convex compact set, ${\mathcal{K}}\subseteq \mathbb{R}^m$ is a convex cone, $G\in \mathbb{R}^{m\times n},$ and $g\in\mathbb{R}^{m}.$ It is easy to show that problem \eqref{conicproblem} is a special case of problem \eqref{problemcouple}.} At the $k$-th iteration, the method in \cite{necoara152} applies Nesterov's optimal first-order method to solve AL subproblem \eqref{subproblemcgal} until a point $x^{k+1}$ satisfying
\begin{equation}\label{Necx}\LCal_\beta\left(x^{k+1};\lambda^k\right)-\LCal_\beta\left(x(\lambda^k);\lambda^k\right)\leq \delta\end{equation}
 is found, where $\delta>0$ is the given accuracy; then the method updates the dual variable by
 \begin{equation}\label{Neclambda}\lambda^{k+1}=\lambda^k+\frac{\beta}{2}\left(Ax^{k+1}-b\right).\end{equation} Again, our framework requires approximately solving subproblem \eqref{subproblemcgal} until a point $x^{k+1}$ satisfying \eqref{violationcgal} is found and updates the dual variable via \eqref{lambda}. Notice that the dual variable update formula \eqref{Neclambda} in \cite{necoara152} is different from \eqref{lambda} in the classical AL method. 

The work \cite[Corollary 3.3 and Theorem 3.4]{necoara152} shows the ergodic convergence rate results in terms of the dual function values, the primal infeasibility, and the primal function values. Moreover, the work \cite[Theorem 3.5]{necoara152} shows that it takes the algorithm (with an optimal choice of the penalty parameter $\beta$ and the solution tolerance $\delta$) a total number of ${\cal O}\left(1/\epsilon\right)$  iterations to return an $\epsilon$-optimal solution $u_{\epsilon}$ defined as follows:
\begin{equation}\label{esolution}|f(u_{\epsilon})-f^*|\leq \epsilon~\text{and}~\text{dist}_{\cal K}(G u_{\epsilon}+g)\leq \epsilon,\end{equation} {where $f^*$ is the optimal value of problem \eqref{conicproblem}.} As mentioned in \cite{necoara152}, the algorithm behind the above complexity result reduces to a quadratic penalty method (without any update of the dual variable) and therefore there is {no convergence guarantee for the dual variable.}

In summary, the results in our paper significantly differ from the ones in \cite{necoara152} in terms of global convergence results, convergence rate results, and the algorithms. \begin{itemize}
  \item [-] \textbf{Global convergence}. {Our IAL framework enjoys the global convergence (under the assumption that the error sequence $\left\{\eta_k\right\}$ is summable), but the inexact gradient augmented Lagrangian method in \cite{necoara152} does not have global convergence guarantee, due to the existence of the positive accuracy constant $\delta.$}
  \item [-] \textbf{Convergence rate}. The convergence results for our IAL framework in terms of the dual objective values, the primal infeasibility, and the primal objective values are all for the \emph{non-ergodic} solution, but the results for the inexact gradient augmented Lagrangian method in \cite{necoara152} are for the \emph{ergodic} solution.
  \item [-] \textbf{Algorithms and dual optimality guarantee}. The dual variable update formula in our IAL framework and the one in \cite{necoara152} are different. The algorithms behind the iteration complexity results (see Theorem 3.8 in \cite{necoara152} and Theorem \ref{thmcomplexity} in our paper) are also sharply different from each other. The algorithm behind Theorem 3.8 in \cite{necoara152} is essentially a penalty method (without any update of the dual variable) but the algorithm behind Theorem \ref{thmcomplexity} in our paper indeed is an IAL method. Therefore, there is no convergence guarantee for the dual variable in \cite{necoara152}, {while it is guaranteed in our paper. In particular,} from {Theorem \ref{thm-rate-global}} and with the choice of the parameters in Theorem \ref{thmcomplexity}, we get
$$\max_{x\in\mathbb{R}^{n}} \left\{\left\langle{{\nabla f(x^{k+1})}+A^T\lambda^{k+1}},\,x^{k+1}-x\right\rangle + g(x^{k+1})-g(x)\right\}\leq \epsilon$$
and $${d(\lambda^*)-d(\lambda^{k+1})}={\cal O}\left(\sqrt{\epsilon}\right).$$
\item [-] \textbf{Definition of $\epsilon$-optimal solution.} Our definition of the $\epsilon$-optimal solution is a natural perturbation of the KKT optimality conditions of problem \eqref{problemcouple}, which involves the dual variable. The definition of the $\epsilon$-optimal solution in \cite{necoara152}, {i.e., \eqref{esolution},} does not involve the dual variable.
\end{itemize}
}

%

{\section{Numerical results}\label{sec:simulation}

In this section, we present some preliminary numerical results for the purpose of comparing the following two things: (i) the difference of the ergodic and non-ergidoc solutions; (ii) the difference of the ``exact'' augmented Lagrangian (EAL) method and our IAL method. Our codes were written in MATLAB and the results were obtained on a standard PC.

The numerical experiments were conducted on basis pursuit problem \eqref{problemcs}. In Table \ref{table:IAL-vs-EAL} we report the results for some small problem with $m=60$, $n=100$ {and sparsity} (number of nonzero entries) $s=15$.
The 10 instances were randomly generated in the following manner: the entries of $A$ were generated randomly following the standard Gaussian distribution $\mathcal{N}(0,1)$; the positions of the nonzero entries in $x^*$ were uniformly randomly chosen and their values were generated following the uniform distribution in $(0,1)$; and finally $b$ is set to $b=Ax^*.$ To construct the bounded set containing the true solution, we set $\hat{x} = A_m^{-1}b$, where $A_m$ is the square matrix formed by the first $m$ columns of $A$. We use $\bar{x}$ to denote the solution returned by EAL or IAL.

We ran both of IAL and EAL for $K=200$ (dual) iterations. We set both of the initial (primal) point $x^1$ and the initial (dual) Lagrange multiplier $\lambda^1$ to $0$. We applied the proximal gradient method to solve the {AL} subproblem until \eqref{violationcgal} is satisfied with $\eta_k =1/k^2$ for IAL and $\eta_k\equiv 10^{-4}$ for EAL. We reported the comparison results in Table \ref{table:IAL-vs-EAL}. In particular, we reported the cpu time (in seconds), the relative error of the solution (denoted by relerr $= \|\bar{x}-x^*\|/\|x^*\|$), the residual of the linear constraint (denoted by resi $=\|A\bar{x}-b\|$), and the objective value error (denoted by objerr $= |\|\bar{x}\|_1-\|x^*\|_1|$). Moreover, we also reported the sparsity (the number of the nonzero entries) of the returned non-ergodic solution $\bar{x}$ (denoted by $s_n$) and the sparsity of the ergodic solution $x_e := \sum_{k=1}^K x^k/K$ (denoted by $s_e$). 

We see from Table \ref{table:IAL-vs-EAL} that IAL and EAL are comparable in terms of the solution quality measured by relerr, resi, and objerr, and there is no evidence showing that one is better than the other. However, IAL is much faster than EAL in terms of the cpu time. This is expected because the {AL} subproblems are solved much less accurately in IAL in the first 100 iterations (compared to EAL). It is worth mentioning that $\eta_k <10^{-4}$ for $k>100$ and $\eta_{k}=0.25*10^{-4}$ for $k=200$ in IAL, that is, the last 100 {AL} subproblems in IAL are solved slightly more accurately than EAL, but IAL is still much faster.

We also observe from Table \ref{table:IAL-vs-EAL} that for both IAL and EAL, the non-ergodic solution $\bar x$ is significantly more sparse than the ergodic solution $x_e$. In fact, the sparsity pattern of the non-ergodic solution always perfectly matches that of the true solution $x^*$. This well justifies the importance of our global convergence and convergence rate analysis on the \emph{non-ergodic} solution in this paper.

{We also report numerial results for some larger problem instances in Table \ref{table:IAL-vs-EAL-large}. The problem instances are randomly generated in the same manner as before. From Table \ref{table:IAL-vs-EAL-large} we have the same observations as the ones from Table \ref{table:IAL-vs-EAL}: IAL is much faster than EAL in terms of the cpu time and the non-ergodic solutions are significantly more sparse than the ergodic solutions.}

\begin{table}[h]
	\centering
	\caption{Numerical results of IAL and EAL for solving basis pursuit problem \eqref{problemcs} {with $m=60$, $n=100$, $s=15$}, where ID denotes the problem instance index.}
	\label{table:IAL-vs-EAL}
	\begin{tabular}{|*{13}{c|}}
		\hline
		    & \multicolumn{6}{|c|}{IAL} & \multicolumn{6}{|c|}{EAL} \\ \hline
            ID & $s_e$ & $s_n$ & relerr & resi & objerr & cpu & $s_e$ & $s_n$ & relerr & resi & objerr & cpu \\\hline
		1 & 40 & 15 & 2.0e-08 & 1.9e-07 & 2.8e-08 & 2.2 & 40 & 15 & 1.4e-08 & 1.3e-07 & 5.7e-08 & 3.9 \\\hline
2 & 27 & 15 & 1.0e-08 & 9.6e-08 & 1.2e-08 & 0.6 & 27 & 15 & 8.0e-09 & 7.0e-08 & 1.6e-08 & 1.3 \\\hline
3 & 16 & 15 & 3.3e-08 & 3.5e-07 & 1.2e-08 & 0.2 & 16 & 15 & 3.9e-08 & 4.0e-07 & 1.4e-08 & 0.3 \\\hline
4 & 23 & 15 & 7.2e-09 & 8.1e-08 & 2.7e-08 & 0.3 & 23 & 15 & 9.0e-09 & 9.8e-08 & 3.0e-08 & 0.5 \\\hline
5 & 25 & 15 & 1.7e-08 & 1.7e-07 & 9.1e-09 & 0.4 & 25 & 15 & 6.4e-09 & 5.7e-08 & 3.1e-09 & 0.8 \\\hline
6 & 26 & 15 & 3.0e-09 & 2.2e-08 & 8.5e-10 & 1.1 & 26 & 15 & 2.8e-09 & 2.1e-08 & 1.8e-09 & 2.0 \\\hline
7 & 26 & 15 & 5.3e-08 & 4.7e-07 & 1.7e-07 & 0.7 & 26 & 15 & 3.5e-08 & 3.0e-07 & 1.3e-07 & 1.7 \\\hline
8 & 23 & 15 & 2.9e-08 & 2.7e-07 & 2.5e-08 & 0.3 & 22 & 15 & 1.1e-08 & 1.0e-07 & 4.6e-08 & 0.7 \\\hline
9 & 25 & 18 & 6.0e-04 & 6.7e-03 & 1.3e-03 & 0.5 & 25 & 18 & 6.0e-04 & 6.7e-03 & 1.3e-03 & 1.0 \\\hline
10 & 17 & 15 & 6.4e-08 & 6.8e-07 & 7.6e-08 & 0.2 & 17 & 15 & 4.7e-08 & 4.6e-07 & 3.0e-08 & 0.3 \\\hline
	\end{tabular}

\end{table}
}

{\color{blue}
\begin{table}[h]
	\centering
	\caption{{Numerical results of IAL and EAL for solving larger basis pursuit problem \eqref{problemcs}, where ID denotes the problem instance index.}}
	\label{table:IAL-vs-EAL-large}
	\begin{tabular}{|*{13}{c|}}
		\hline
		    & \multicolumn{6}{|c|}{IAL} & \multicolumn{6}{|c|}{EAL} \\ \hline
& \multicolumn{12}{|c|}{{$m=600$, $n=1000$, $s=150$}}\\\hline
            ID & $s_e$ & $s_n$ & relerr & resi & objerr & cpu & $s_e$ & $s_n$ & relerr & resi & objerr & cpu \\\hline
		1 & 241 & 150 & 5.3e-12 & 5.2e-10 & 4.1e-11 & 588.5 & 241 & 150 & 3.9e-12 & 3.9e-10 & 2.0e-11 & 871.5 \\\hline
2 & 188 & 150 & 7.4e-11 & 7.1e-09 & 3.7e-10 & 139.6 & 188 & 150 & 6.8e-11 & 6.7e-09 & 3.4e-10 & 241.8 \\\hline
3 & 233 & 150 & 5.4e-11 & 5.1e-09 & 5.2e-10 & 239.0 & 233 & 150 & 2.5e-11 & 2.4e-09 & 1.4e-10 & 360.6 \\\hline
4 & 222 & 150 & 6.0e-11 & 5.5e-09 & 3.2e-10 & 130.5 & 222 & 150 & 3.1e-11 & 3.0e-09 & 3.1e-10 & 222.0 \\\hline
5 & 225 & 150 & 1.2e-11 & 1.2e-09 & 2.8e-11 & 452.9 & 225 & 150 & 6.4e-12 & 6.0e-10 & 5.3e-11 & 698.7 \\\hline
& \multicolumn{12}{|c|}{{$m=1800$, $n=3000$, $s=450$}}\\\hline
1 & 1609 & 450 & 7.4e-12 & 2.2e-09 & 1.2e-10 & 2999.1 & 1609 & 450 & 6.8e-12 & 2.0e-09 & 7.8e-12 & 4424.2 \\\hline
2 & 1668 & 450 & 3.6e-12 & 9.9e-10 & 3.0e-11 & 3823.4 & 1668 & 451 & 4.2e-12 & 1.2e-09 & 2.2e-11 & 8075.7 \\\hline
3 & 1639 & 450 & 1.3e-11 & 3.5e-09 & 5.3e-11 & 5176.9 & 1639 & 450 & 8.6e-12 & 2.5e-09 & 5.7e-11 & 5921.8 \\\hline
4 & 1669 & 450 & 5.8e-12 & 1.6e-09 & 2.1e-11 & 7136.7 & 1669 & 451 & 7.9e-12 & 2.4e-09 & 1.1e-10 & 7858.3 \\\hline
5 & 1692 & 450 & 2.9e-13 & 7.9e-11 & 5.1e-13 & 2464.4 & 1692 & 450 & 3.1e-13 & 8.7e-11 & 6.8e-12 & 3146.6 \\\hline
	\end{tabular}

\end{table}
}

}

\begin{acknowledgements}
{We are grateful to the editors and three anonymous referees for their insightful comments and suggestions that helped us improve the quality of the paper greatly.}
We would like also to thank {Guanghui Lan,} Zhaosong Lu, Zaiwen Wen, and Wotao Yin for their insightful comments, which helped us in improving the results in this paper. {We thank Xiangfeng Wang for the useful discussion on an earlier version of this paper.}

{The work of Y.-F. Liu was supported in part by NSFC grants 11631013, 11331012, 11671419, and 11571221, and Beijing NSF grant L172020. The work of X. Liu was supported in part by NSFC grants 11622112, 11471325, 91530204, and 11688101, the National Center for Mathematics and Interdisciplinary Sciences, CAS, and Key Research Program of Frontier Sciences (QYZDJ-SSW-SYS010), CAS. The work of S. Ma was supported in part by a startup package in Department of Mathematics at UC Davis.}
\end{acknowledgements}



\bibliography{ial20180312}

\begin{thebibliography}{10}

\bibitem{beck09siamfista}
Amir Beck and Marc Teboulle.
\newblock A fast iterative shrinkage-thresholding algorithm for linear inverse
  problems.
\newblock {\em SIAM J. Imaging Sci.}, 2(1):183--202, 2009.

\bibitem{bertsekas2014constrained}
Dimitri~P. Bertsekas.
\newblock {\em Constrained Optimization and Lagrange Multiplier Methods}.
\newblock Athena Scientific, Belmont, MA, USA, 1996.

\bibitem{ber99programming}
Dimitri~P. Bertsekas.
\newblock {\em Nonlinear Programming}.
\newblock Athena Scientific, Belmont, MA, USA, 1999.

\bibitem{bertsekas2000gradient}
Dimitri~P. Bertsekas and John~N. Tsitsiklis.
\newblock Gradient convergence in gradient methods with errors.
\newblock {\em SIAM J. Optim.}, 10(3):627--642, 2000.

\bibitem{buys}
J.~D. Buys.
\newblock {\em Dual algorithms for constrained optimization problems}.
\newblock Ph. D. Thesis, University of Leiden, The Netherlands, 1972.

\bibitem{nesterov14inexact}
Olivier Devolder, Fran{\c{c}}ois Glineur, and Yurii Nesterov.
\newblock First-order methods of smooth convex optimization with inexact
  oracle.
\newblock {\em Math. Program.}, 146(1-2):37--75, 2014.

\bibitem{eckstein2013practical}
Jonathan Eckstein and Paulo J.~S. Silva.
\newblock A practical relative error criterion for augmented {L}agrangians.
\newblock {\em Math. Program.}, 141(1-2):319--348, 2013.

\bibitem{fw56}
Marguerite Frank and Philip Wolfe.
\newblock An algorithm for quadratic programming.
\newblock {\em {Naval} Research Logistics Quarterly}, 3(1-2):95--110, 1956.

\bibitem{goldstein2009split}
Tom Goldstein and Stanley Osher.
\newblock The split {B}regman method for {L}1-regularized problems.
\newblock {\em SIAM J. Imaging Sci.}, 2(2):323--343, 2009.

\bibitem{nemirovski14cg}
Zaid Harchaoui, Anatoli Juditsky, and Arkadi Nemirovski.
\newblock Conditional gradient algorithms for norm-regularized smooth convex
  optimization.
\newblock {\em Math. Program.}, 152(1-2):75--112, 2015.

\bibitem{hestenes69}
Magnus~R. Hestenes.
\newblock Multiplier and gradient methods.
\newblock {\em J. Optim. Theory Appl.}, 4(5):303--330, 1969.

\bibitem{iusem1995convergence}
Alfredo~N. Iusem and Marc Teboulle.
\newblock Convergence rate analysis of nonquadratic proximal methods for convex
  and linear programming.
\newblock {\em Math. Oper. Res.}, 20(3):657--677, 1995.

\bibitem{thesiscndg}
Martin Jaggi.
\newblock {\em Sparse Convex Optimization Methods for Machine Learning}.
\newblock PhD thesis, ETH Zurich, 2011.

\bibitem{Jaggi13}
Martin Jaggi.
\newblock Revisiting {F}rank-{W}olfe: Projection-free sparse convex
  optimization.
\newblock In {\em Proceeding of ICML, Atlanta}, 2013.

\bibitem{lansco14}
Guanghui Lan.
\newblock Gradient sliding for composite optimization.
\newblock {\em Math. Program.}, 159(1-2):201--235, 2016.

\bibitem{lan2015iteration}
Guanghui Lan and Renato D.~C. Monteiro.
\newblock Iteration-complexity of first-order augmented {L}agrangian methods
  for convex programming.
\newblock {\em Math. Program.}, 155(1-2):511--547, 2016.

\bibitem{lan14cgs}
Guanghui Lan and Yi~Zhou.
\newblock Conditional gradient sliding for convex optimization.
\newblock {\em SIAM J. Optim.}, 26(2):1379--1409, 2016.

\bibitem{tval}
Chengbo Li, Wotao Yin, Hong Jiang, and Yin Zhang.
\newblock An efficient augmented {L}agrangian method with applications to total
  variation minimization.
\newblock {\em Comput. Optim. Appl.}, 56(3):507--530, 2013.

\bibitem{sun17}
Xudong Li, Defeng Sun, and Kim-Chuan Toh.
\newblock A highly efficient semismooth {N}ewton augmented {L}agrangian method
  for solving {L}asso problems.
\newblock {\em SIAM J. Optim.}, 28(1):433--458, 2018.

\bibitem{luo1993error}
Zhi-Quan Luo and Paul Tseng.
\newblock Error bounds and convergence analysis of feasible descent methods: A
  general approach.
\newblock {\em Ann. Oper. Res.}, 46(1):157--178, 1993.

\bibitem{mu14matrixrecovery}
Cun Mu, Yuqian Zhang, John Wright, and Donald Goldfarb.
\newblock Scalable robust matrix recovery: {F}rank--{W}olfe meets proximal
  methods.
\newblock {\em SIAM J. Sci. Comput.}, 38(5):3291--3317, 2016.

\bibitem{necoara15}
Ion Necoara and Andrei Patrascu.
\newblock Iteration complexity analysis of dual first-order methods for conic
  convex programming.
\newblock {\em Optim. Methods Softw.}, 31(3):645--678, 2016.

\bibitem{necoara152}
Ion Necoara, Andrei Patrascu, and Francois Glineur.
\newblock Complexity of first order inexact {L}agrangian and penalty methods
  for conic convex programming.
\newblock {\em https://arxiv.org/abs/1506.05320}, 2017.

\bibitem{necoara14siamopt}
Valentin Nedelcu, Ion Necoara, and Quoc Tran{-}Dinh.
\newblock Computational complexity of inexact gradient augmented {L}agrangian
  methods: Application to constrained {MPC}.
\newblock {\em {SIAM} J. Control Optim.}, 52(5):3109--3134, 2014.

\bibitem{nesterov83method}
Yurii Nesterov.
\newblock A method of solving a convex programming problem with convergence
  rate ${O}(1/k^2)$.
\newblock {\em Sov. Math. Dokl.}, 27(2):372--376, 1983.

\bibitem{nesterov05mpsmooth}
Yurii Nesterov.
\newblock Smooth minimization of non-smooth functions.
\newblock {\em Math. Program.}, 103(1):127--152, 2005.

\bibitem{nesterov13composite}
Yurii Nesterov.
\newblock Gradient methods for minimizing composite functions.
\newblock {\em Math. Program.}, 140(1):125--161, 2013.

\bibitem{nesterov15nonsmooth}
Yurri Nesterov.
\newblock Complexity bounds for primal-dual methods minimizing the model of
  objective function.
\newblock {\em to appear Math. Program.}, 2017.

\bibitem{osher2005iterative}
Stanley Osher, Martin Burger, Donald Goldfarb, Jinjun Xu, and Wotao Yin.
\newblock An iterative regularization method for total variation-based image
  restoration.
\newblock {\em Multiscale Model. Simul.}, 4(2):460--489, 2005.

\bibitem{powell69}
M.~J.~D. Powell.
\newblock A method for nonlinear constraints in minimization problems.
\newblock {\em In: R. Fletcher (ed.) Optimization}, pages 283--298, 1969.

\bibitem{rockafellar1973multiplier}
Ralph~Tyrrell Rockafellar.
\newblock The multiplier method of {H}estenes and {P}owell applied to convex
  programming.
\newblock {\em J. Optim. Theory Appl.}, 12(6):555--562, 1973.

\bibitem{rockafellar}
Ralph~Tyrrell Rockafellar.
\newblock Augmented {L}agrangian and applications of the proximal point
  algorithm in convex progremming.
\newblock {\em Math. Oper. Res.}, 1(2):97--116, 1976.

\bibitem{so13inexact}
Anthony Man-Cho So and Zirui Zhou.
\newblock Non-asymptotic convergence analysis of inexact gradient methods for
  machine learning without strong convexity.
\newblock {\em Optim. Methods Softw.}, 32(4):963--992, 2017.

\bibitem{tapia}
R.~A. Tapia.
\newblock Newton's method for problems with equality constraints.
\newblock {\em SIAM J. Numer. Anal.}, 11(5):874--886, 1974.

\bibitem{sun15sdpnal+}
Liuqin Yang, Defeng Sun, and Kim-Chuan Toh.
\newblock {SDPNAL+}: A majorized semismooth {N}ewton-{CG} augmented
  {L}agrangian method for semidefinite programming with nonnegative
  constraints.
\newblock {\em Math. Prog. Comp.}, 7(3):331--366, 2015.

\bibitem{errorforgetting}
Wotao Yin and Stanley Osher.
\newblock Error forgetting of {B}regman iteration.
\newblock {\em J. Sci. Comput.}, 54(2-3):684--695, 2013.

\bibitem{yin2008bregman}
Wotao Yin, Stanley Osher, Donald Goldfarb, and Jerome Darbon.
\newblock Bregman iterative algorithms for $\ell_1$-minimization with
  applications to compressed sensing.
\newblock {\em SIAM J. Imaging Sci.}, 1(1):143--168, 2008.

\bibitem{yuan}
Yaxiang Yuan.
\newblock Analysis on a superlinearly convergent augmented {L}agrangian method.
\newblock {\em Acta Math. Sinica}, 30(1):1--10, 2014.

\end{thebibliography}
\bibliographystyle{plain}

\end{document}